\algrenewcommand\algorithmicrequire{\textbf{Input}:}
\algrenewcommand\algorithmicensure{\textbf{Output}:}
\DeclareMathOperator{\initial}{in}
\DeclareMathOperator{\gr}{gr}
\DeclareMathOperator{\leadterm}{lt}
\DeclareMathOperator{\leadmonomial}{lm}
\DeclareMathOperator{\conv}{conv}
\DeclareMathOperator{\exponent}{exp}
\DeclareMathOperator{\vol}{vol}
\DeclareMathOperator{\torexp}{torexp}
\DeclareMathOperator{\tor}{torsion}
\DeclareMathOperator{\torfree}{torfree}
\newcommand{\bR}{{\mathbb R}}
\newcommand{\bQ}{{\mathbb Q}}
\newcommand{\bN}{{\mathbb N}}
\newcommand{\bZ}{{\mathbb Z}}
\theoremstyle{definition}
\newtheorem{theorem}{Theorem}[section]
\newtheorem{definition}[theorem]{Definition}
\newtheorem{remark}[theorem]{Remark}
\newtheorem{lemma}[theorem]{Lemma}
\newtheorem{proposition}[theorem]{Proposition}
\newtheorem{corollary}[theorem]{Corollary}
\newtheorem{example}[theorem]{Example}
\newtheorem*{standinghypothesis}{Standing Hypotheses}
\title{Subalgebra and Khovanskii bases equivalence}
\author[Alstad]{Colin Alstad}
\address{220 Parkway Drive, Clemson University, Clemson, SC 29634}
\email{calstad@clemson.edu}
\author[Burr]{Michael Burr}
\address{220 Parkway Drive, Clemson University, Clemson, SC 29634}
\email{burr2@clemson.edu}
\author[Clarke]{Oliver Clarke}
\address{The University of Edinburgh, James Clerk Maxwell Building, Edinburgh EH9 3FD}
\email{oliver.clarke@ed.ac.uk}
\author[Duff]{Timothy Duff}
\address{Department of Mathematics, University of Washington, Seattle, WA 98195}
\email{timduff@uw.edu}
\begin{document}
\maketitle

\begin{abstract}
The main results of this paper establish a partial correspondence between two previously-studied analogues of Gr\"{o}bner bases in the setting of algebras: namely, subalgebra (aka SAGBI) bases for quotients of polynomial rings and Khovanskii bases for valued algebras.
We aim to bridge the gap between the concrete, computational aspects of the former and the more abstract theory of the latter.
Our philosophy is that most interesting examples of Khovanskii bases can also be realized as subalgebra bases and vice-versa.
We also discuss the computation of Newton-Okounkov bodies, illustrating how interpreting Khovanskii bases as subalgebra bases makes them more amenable to the existing tools of computer algebra.
\end{abstract}

\section{Introduction}
\emph{Subalgebra bases} (sometimes also called \emph{canonical bases} or \emph{SAGBI bases}) were originally introduced as analogues to Gr\"obner bases for polynomial algebras independently by Kapur and Madlener~\cite{KapurMadlener:1989} and Robbiano and Sweedler~\cite{RobbianoSweedler:1990}.  This concept was further generalized to quotient polynomial rings by Stillman and Tsai~\cite{StillmanTsai:1999} and to \emph{Khovanskii bases} of valued algebras by Kaveh and Manon~\cite{KavehManon:2019}. 

There are several existing implementations of subalgebra bases for polynomial algebras in computer algebra systems: two implementations \cite{sagbi.lib,BrunsConca:2024} using \textsc{Singular} \cite{Singular}, a forthcoming implementation \cite{BigattiRobbiano:2022} in \textsc{CoCoA} \cite{CoCoA}, and an implementation \cite{SubalgebraBasesSource,Burretal:2023} in \textsc{Macaulay2} \cite{Macaulay2} by several of the authors of this paper. 
Among these implementations, we note that the recent work in ~\cite{BrunsConca:2024} reports impressive runtimes compared to the alternatives on a test suite of challenging examples.
On the other hand, the package~\cite{SubalgebraBasesSource}, described in~\cite{Burretal:2023}, also handles subalgebra bases for quotient rings.
This level of generality is needed for the computations in the present paper.

Showcasing the generality of Khovanskii bases,~\cite[Example 7.7]{KavehManon:2019} constructs finite Khovanskii bases for the standard invariant ring of the alternating group $A=k[x,y,z]^{A_3}$. Viewed as a subalgebra $A \subseteq k[x,y,z]$, a finite subalgebra basis for $A$ does not exist~\cite{Gobel:1995}.
However, in ~\Cref{ex:alternating}, we show that there is more to the story: if we present $A$ as the \emph{quotient} of a polynomial ring, the Khovanskii bases in question are also subalgebra bases in the sense of~\cite{StillmanTsai:1999}.

Our main goal is to establish explicit connections between the two previously-mentioned notions of Khovanskii bases and subalgebra bases for quotient rings, with an eye towards leveraging existing implementations. We show in Theorem \ref{thm:khovanskiirealization} that the most common cases of Khovanskii bases, namely those arising from valuations that satisfy our \emph{standing hypotheses} in~\Cref{sec:KhovanskiitoSubalgebra}, can also be realized as subalgebra bases of quotients of polynomial rings.
We note that the same hypotheses are satisfied by the valuations constructed using tropical geometry in~\cite[Theorem 1]{KavehManon:2019}.

As a partial converse,~\Cref{cor:khovanskii-from-sagbi} gives sufficient conditions under which subalgebra bases for quotient rings are also Khovanskii bases.
Note that this result holds unconditionally in the original setting of polynomial rings.
\Cref{sec:eequivalence-valuations} contains further discussion of the relationship between the monomial orders and valuations appearing in our constructions.
Finally, in~\Cref{sec:NewtonOkounkov}, we apply our results to computing Newton-Okounkov bodies.

\section{Background}
Fix a field $k$ and let $R\vcentcolon=k[x_1,\dots,x_n]$ be a polynomial ring with a monomial order $<$.  We fix the convention that monomial orders use the maximum convention, that is, $\leadterm(f+g)\leq\max\{\leadterm(f),\leadterm(g)\}$, provided none of $f$, $g$, and $f+g$ are zero.

Let $A\vcentcolon=k[f_i\colon i\in \mathcal I]$ be a subalgebra of $R$, where $\mathcal I$ is some index set.  The \emph{initial algebra} of $A$ is the algebra generated by the leading terms of $A$, that is,
\begin{equation}\label{eq:initialAlgebra}
\leadterm(A)\vcentcolon=k[\leadterm(f) \colon f\in A].
\end{equation}
Here, we use the notation $\leadterm(A)$ instead of the more common notation $\initial_<(A)$, as we reserve the latter notation for valuations and Khovanskii bases, cf. \cite{Sturmfels:1996}.

\begin{definition}
A set $\{g_j\}_{j\in\mathcal{J}}\subseteq A$ is a {\em subalgebra basis} for $A$ with respect to $<$ if the leading terms $\{\leadterm(g_j)\}_{j\in\mathcal{J}}$ generate the initial algebra of $A$, that is,
\begin{equation}
\leadterm(A)=k[\leadterm(g_j) \colon j\in\mathcal{J}].\label{eq:SAGBIdef}
\end{equation}
\end{definition}

One of the most significant differences between subalgebra bases and Gr\"obner bases is that subalgebra bases do not receive the benefits of the Noetherian property.  Therefore, a finitely generated algebra may not have a finite subalgebra basis under any term ordering, see, for example, \cite[Example 1.20]{RobbianoSweedler:1990}.

Many of the standard algorithms from Gr\"obner basis theory have analogues in subalgebra basis theory.  For instance, polynomial long division is replaced by subduction.  The subduction algorithm provides a rewriting of a polynomial $f\in R$ with respect to a subalgebra basis $\{g_j\}_{j\in\mathcal{J}}$.  The result of the algorithm is a finite sum
$$
f=\sum_{\alpha\in\bN^{\mathcal{J}}} c_\alpha g^{\alpha} + r,
$$
with the following properties: (1) $\alpha_j$ is zero for all but finitely many indices, (2) $c_\alpha$ is zero for all but finitely many values of $\alpha$, (3) if $r\not=0$, then $\leadterm(r)\leq\leadterm(f)$, (4) the nonzero terms of $r$ are not in the initial algebra $\leadterm(A)$, (5) for every $\alpha$ with $c_\alpha\not=0$, $\leadterm(f)\geq \leadterm(g^\alpha)$, and (6) the values of $\leadmonomial(g^\alpha)$ with $c_\alpha\not=0$ are distinct.  The finiteness in these definitions comes about since monomials orders are well-orders, even when the subalgebra basis itself has infinitely many polynomials.
Subduction has many standard properties of other polynomial rewriting procedures: for example, the remainder $r$ is zero if and only if $f\in A$.  In fact, the remainder $r$ is independent of the choice of $c_\alpha$.  Therefore, when $f\in A$, the remainder is zero and exactly one $\alpha$ has both $c_\alpha\not=0$ and $\leadmonomial(f)=\leadmonomial(g^\alpha)$.

The term order induces a filtration on $A$ by $k$-vector spaces as follows: For any $\alpha\in\bN^n$,
\begin{equation}\label{eq:filtration}
F_{\leq\alpha}(A)\vcentcolon=\{f\in A:\leadterm(f)\leq x^\alpha\}\cup\{0\}.
\end{equation}
$F_{<\alpha}(A)$ is defined similarly, by replacing $\leq$ by $<$ in Equation (\ref{eq:filtration}).  Then the {\em associated graded algebra} is
\begin{equation}\label{eq:associatedgraded}
\gr_<(A)\vcentcolon=\bigoplus_{\alpha\in\bN^n}F_{\leq\alpha}(A)/F_{<\alpha}(A).
\end{equation}

\subsection{Subalgebra bases for quotient rings}
In \cite{StillmanTsai:1999}, Stillman and Tsai extended the definition of subalgebra bases to quotients of polynomial rings.  They present their theory for quotient rings over commutative Noetherian domains, but we restrict our discussion to quotient rings over fields.

Suppose that $k$, $R$, and $<$ are defined as above, and let $I$ be an ideal of $R$.  Let $A$ be a subalgebra of the quotient $R/I$.  Stillman and Tsai then consider the following sequence of maps in order to define the notion of a leading term:
$$
\begin{tabular}{ccccccc}
$R/I$&
$\xrightarrow{\sim}$&
$R$&
$\xrightarrow{\leadterm}$&
$R$&
$\xrightarrow{q}$&
$R/\leadterm(I)$\\
$[f]$&$\mapsto$&
$\tilde{f}$&$\mapsto$&
$\leadterm(\tilde{f})$&$\mapsto$&
$q(\leadterm(\tilde{f}))$.
\end{tabular}
$$
The map $\sim$ indicates the normal form of $f$ in terms of standard monomials (and not an isomorphism).  The function $\leadterm$ selects the largest nonzero term under $<$ of its input polynomial.  Finally, the map $q$ is the quotient map.  The image of $[f]$ under this sequence of maps is defined to be its {\em leading term}, denoted by $\leadterm([f])$.

With the notion of a leading term, the definition of the {\em initial algebra} of $A$ is identical to the definition from polynomial algebras, as in Equation~\eqref{eq:initialAlgebra}.
\begin{definition}
A set $\{[g_j]\}_{j\in\mathcal{J}}\subseteq A$ is a {\em subalgebra basis} for $A$ with respect to $<$ if the leading terms $\{\leadterm([g_j])\}_{j\in\mathcal{J}}$ generate the initial algebra of $A$, as defined as in Equation~\eqref{eq:SAGBIdef}.
\end{definition}
We highlight that $\leadterm([f])$ is an element of $R/\leadterm(I)$ while $\leadterm(f)$ is a monomial in $R$.  Therefore, in the quotient case, both of the algebras appearing in Equations~\eqref{eq:initialAlgebra} and \eqref{eq:SAGBIdef} are subalgebras of $R/\leadterm(I).$

Subduction and the associated graded have similar definitions and properties to the polynomial case.  For example, for $[f]\in R/I$ and $\{[g_j]\}_{j\in\mathcal{J}}$ a subalgebra basis, the algorithm produces
$$
\tilde{f}=\sum_{\alpha\in\bN^{\mathcal{J}}}c_\alpha \tilde{g}^\alpha + r + h,
$$
with the additional property that $h\in I$ and $r$ is composed of standard monomials for $I$.  The definition of leading terms for quotients can be used to define a filtration as in Equation \eqref{eq:filtration} and associated graded as in Equation \eqref{eq:associatedgraded}.  In this case, the filtration is defined as
\begin{equation}\label{eq:filtrationquotient}
F_{\leq\alpha}(A)\vcentcolon=\{[f]\in R/I:\leadterm(\tilde{f})\leq x^\alpha\}.
\end{equation}
$F_<(A)$ is defined similarly, by replacing $\leq$ by $<$ in Equation (\ref{eq:filtrationquotient}).  Stillman and Tsai show that the associated graded $\gr_<(A)$ is isomorphic to $\leadterm(A)$, see the algebraic remark in \cite[Section 2]{StillmanTsai:1999}.

\subsection{Khovanskii bases}
In \cite{KavehManon:2019}, Kaveh and Manon adapted the ideas of subalgebra bases to finitely generated valued $k$-algebras as follows: Let $A$ be a finitely generated $k$-algebra and domain.  Suppose, in addition, that $A$ is equipped with a valuation $\nu\vcentcolon A\setminus\{0\}\rightarrow\bQ^r$ that lifts the trivial valuation on $k^\times$.  Moreover, we assume that $\bQ^r$ is given a total ordering $\succ$ so that the image of $\nu$ is maximum-well-ordered.  We fix the convention that $\nu$ is a min-valuation, that is,  $\nu(f+g)\succeq\min\{\nu(f),\nu(g)\}$, provided none of $f$, $g$, and $f+g$ are zero.

This valuation induces a filtration where, for any $a\in\bQ^r$,
\begin{equation}\label{eq:filtrationKhovanskii}
F_{\succeq a}(A)\vcentcolon=\{f\in A:\nu(f)\succeq a\}\cup\{0\}.
\end{equation}
$F_{\succ}(A)$ is defined similarly, by replacing $\succeq$ by $\succ$ in Equation (\ref{eq:filtrationKhovanskii}).  The {\em associated graded algebra} is
$$
\gr_\nu(A)=\bigoplus_{a\in\bQ^r} F_{\succeq a}(A)/F_{\succ a}(A).
$$
We often include the assumption that $\nu$ has \emph{one-dimensional leaves}, meaning each of the summands above is a vector space of dimension at most 1. 
This assumption is frequently satisfied, see e.g.~\cite[Theorem 2.3]{KavehManon:2019}, 
and it has useful algorithmic consequences.

\begin{definition}
A set of nonzero elements $\{g_j\}_{j\in\mathcal{J}}$ is a {\em Khovanskii basis} for $A$ with respect to $\nu$ if the set of images 
$g_j+F_{\succ \nu(g_j)}$ for $j \in \mathcal{J}$
generate $\gr_\nu(A)$ as an algebra.
\end{definition}

As in the subalgebra basis case, the subduction algorithm can be adapted to Khovanskii bases. Any element of the algebra $f\in A$ can be rewritten as a polynomial in the Khovanskii basis $\{g_j\}_{j\in\mathcal{J}}$,
$$
f=\sum_{\alpha\in\bN^{\mathcal{J}}}c_\alpha g^\alpha,
$$
with the following properties: (1) $\alpha_j$ is zero for all but finitely many indices, (2) $c_\alpha$ is zero for all but finitely many values of $\alpha$, (3) for every $\alpha$ with $c_\alpha\not=0$, $\nu(f)\preceq \nu(g^\alpha)$, and (4) the values of $\nu(g^\alpha)$ with $c_\alpha\not=0$ are distinct.
 The finiteness comes from the fact that the image of $\nu$ is well-ordered, and the distinctness comes from the one-dimensional leaves assumption. Unlike subalgebra bases, the algebra $A$ might not be a subalgebra of some larger algebra, so the subduction rewriting can only be performed on elements of $A$.

\section{Khovanskii bases as subalgebra bases}\label{sec:KhovanskiitoSubalgebra}

Let $A$ be a finitely generated $k$-algebra and domain, as in the definition of Khovanskii bases.  Suppose that $\{g_1,\dots,g_m\}$ is a finite Khovanskii basis for $A$. 
Our goal is to find a subalgebra basis in the sense of \cite{StillmanTsai:1999} that reflects the structure and properties of $A$.

We maintain the following \emph{standing hypotheses} on a valuation in order to simplify theorem statements.
\begin{standinghypothesis}\label{standinghypothesis}
Let $A$ be a $k$-algebra equipped with a valuation $\nu:A\setminus\{0\}\rightarrow\bQ^r$, where $\bQ^r$ is given a total order $\prec$.
We say the valued algebra $\nu:A\setminus\{0\}\rightarrow\bQ^r$ satisfies the \emph{standing hypotheses} if (1) the image $\nu (A \setminus\{0\}) \subseteq\bQ^r$ is maximum-well-ordered, (2) $A$ is a finitely-generated domain, (3) $\nu $ lifts the trivial valuation on $k^\times$, and (4) $\nu$ has one-dimensional leaves.    
\end{standinghypothesis}

Let $R\vcentcolon=k[x_1,\dots,x_m]$ be a {\em presentation ring} for $A$ corresponding to our Khovanskii basis, meaning that there is a \emph{presentation map} $\pi:R\rightarrow A$ defined by $\pi(x_i)=g_i$.  Let $I$ be the kernel of this map.  Using the valuation $\nu,$ we define a monomial order on  $R$.
\begin{definition}
Let $R$ be a presentation ring for a finite Khovanskii basis with respect to $\nu $ satisfying the standing hypotheses.
We define a monomial order $<$ on $R$ {\em induced by the valuation $\nu$} as follows: $x^\alpha>x^\beta$ if $\nu(\pi(x^\alpha))\prec\nu(\pi(x^\beta))$ or $\nu(\pi(x^\alpha))=\nu(\pi(x^\beta))$ and $x^\alpha>'x^\beta$ for some fixed, tie-breaking monomial order $<'$ on $R$.
\end{definition}
We verify that $<$ defines a monomial order.
It is a total ordering since both the image of $\nu$ is totally ordered and $<'$ induces a total ordering on $\bN^m$.  
Also, $<$ refines the partial order given by divisibility due to the property $\nu(f g) = \nu (f) + \nu (g).$
Finally, we note that $\nu$ lifts the trivial valuation on $k$ and its image is well-ordered.
Thus, there cannot be any $f\in A \setminus k$ such that $\nu(f)\succ 0$. 
Since $<'$ satisfies $1\leq' x^\alpha$ for all $\alpha$, we conclude that $1\leq x^\alpha$ for any $\alpha\in\bN^m$.

\begin{remark}
One of the main results in \cite{KavehManon:2019} is the construction of the valuation $\nu_M$ corresponding to a prime cone of the tropicalization of $I$, where $I$ is the kernel defined above.  This a push-forward of a quasi-valuation~\cite[Definition 2.26]{KavehManon:2019} $\tilde{\nu}_M$ defined on the presentation ring $R$.  When the image of $\nu_M$ is maximum-well-ordered, the monomial order $<$ defined here is a refinement of the order on monomials given by $-\tilde{\nu}_M$.
\end{remark}

We now consider $R/I$ as a subalgebra of itself. 
For a Khovanskii basis $\{ g_1, \ldots , g_m \},$ we
observe that $\{[x_1],\dots,[x_m]\}$ form a subalgebra basis for $R/I$.  To verify this, we note that if $\leadterm([x_i])\not=x_i+\leadterm(I)$, then $x_i$ is not a standard monomial.  Therefore, for all $f\in R$, $x_i$ does not appear in $\leadterm(\tilde{f})$, and so $\leadterm([f])$ can be represented without $x_i$.  Therefore, $\{[x_1],\dots,[x_m]\}$ and $\{[x_1],\dots,\widehat{[x_i]},\dots[x_m]\}$ generate the same subalgebra of $\gr_>(R/I)$.  Continuing recursively, we find that for every $f$ in $R$, $\leadterm([f])$ can be represented by a monomial in the variables $x_i$ such that $\leadterm([x_i])=x_i+\leadterm(I)$.  Hence, these variables generate $\gr_>(R/I)$ and form a subalgebra basis.

Therefore, not only are $R/I$ and $A$ isomorphic as $k$-algebras, but the presentation map $\pi$ also takes the subalgebra basis $\{[x_1],\dots,[x_m]\}$ for $R/I$ to the Khovanskii basis $\{g_1,\dots,g_m\}$ for $A$.

\begin{remark}\label{rem:notKhovanskii}
The argument above does not use the assumption that $\{g_1,\dots,g_m\}$ form a Khovanskii basis.  Therefore, the set of $\{[x_i]\}$ where $\leadterm([x_i])=x_i+\leadterm(I)$ always form a subalgebra basis for $R/I$.  
\end{remark}

Next, we show that the grading and filtration given by this monomial order match that of the valuation.

\begin{lemma}\label{lem:leadtermequality}
With the standing hypotheses and notation as above, let $[f]\in R/I$ be nonzero, then $\nu(\pi(f))=\nu(\pi(\leadterm(\tilde{f})))$.
\end{lemma}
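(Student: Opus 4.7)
The plan is to reduce the lemma to a key claim about standard monomials and then prove that claim using subduction against the Khovanskii basis $\{g_1, \dots, g_m\}$.

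The \emph{key claim} is that any two distinct standard monomials $x^\alpha \neq x^\beta$ satisfy $\nu(g^\alpha) \neq \nu(g^\beta)$.  Given the claim, the lemma is immediate: write $\tilde{f} = \sum_{i=1}^n c_i x^{\alpha_i}$ with $c_i \neq 0$ and distinct standard $x^{\alpha_i}$, and order indices so that $x^{\alpha_1} > \cdots > x^{\alpha_n}$.  The definition of $<$ then forces $\nu(g^{\alpha_1}) \preceq \nu(g^{\alpha_i})$ for every $i$, and since the $\nu(g^{\alpha_i})$ are now pairwise distinct the strict case of the ultrametric inequality gives
\[
\nu(\pi(f)) \;=\; \nu\bigl(\textstyle\sum_i c_i g^{\alpha_i}\bigr) \;=\; \min_i \nu(c_i g^{\alpha_i}) \;=\; \nu(g^{\alpha_1}) \;=\; \nu(\pi(\leadterm(\tilde{f}))),
\]
where we used that $\nu$ lifts the trivial valuation on $k^\times$ to absorb the scalars.

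To prove the key claim, assume for contradiction that distinct standard $x^\alpha, x^\beta$ satisfy $\nu(g^\alpha) = \nu(g^\beta) = v$.  The one-dimensional-leaves hypothesis supplies a unique $c \in k^\times$ with $h := g^\beta - c\, g^\alpha \in F_{\succ v}(A)$.  Applying the Khovanskii-basis subduction algorithm to $h$ yields a finite expression $h = \sum_j d_j g^{\beta_j}$ in which each $\nu(g^{\beta_j}) \succeq \nu(h) \succ v$; finiteness is ensured by the maximum-well-ordered standing hypothesis on $\nu(A \setminus \{0\})$.  Lifting to $R$ via $\pi(x_i) = g_i$, the polynomial
\[
P \;:=\; x^{\beta} - c\, x^{\alpha} - \sum_j d_j\, x^{\beta_j}
\]
lies in $I = \ker \pi$.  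Under $<$, the weights $\nu(g^{\beta_j}) \succ v$ force $x^{\beta_j} < x^\alpha$ and $x^{\beta_j} < x^\beta$, so $\leadterm(P)$ is whichever of $x^\alpha, x^\beta$ is larger under the tie-breaker $<'$; in either case a standard monomial is placed into $\leadterm(I)$, the sought contradiction.  (The polynomial $P$ is nonzero because the monomials occurring in it have pairwise distinct $\nu$-weights, hence are distinct in $R$, and the coefficient of $x^\beta$ is $1$.)

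The main obstacle is the key claim itself.  Neither the one-dimensional-leaves hypothesis nor the definition of standardness alone prevents two distinct standard monomials from mapping to elements of $A$ with the same $\nu$-value, so a direct term-by-term comparison of leading parts in $\gr_\nu(A)$ does not suffice.  The essential use of the Khovanskii basis enters through subduction, which converts the filtration-theoretic statement ``$h$ lies deeper in the filtration'' into an explicit polynomial relation $P \in I$ whose leading monomial is trapped among the supposedly standard $x^\alpha, x^\beta$.
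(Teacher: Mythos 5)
Your proposal is correct, and it takes a genuinely different route from the paper's proof. The paper argues directly: it subducts $\pi(f)$ to get $\pi(f)=\sum c_\alpha g^\alpha$, lifts this to a polynomial $\overline{f}=\sum c_\alpha x^\alpha\in R$ with $\tilde{f}-\overline{f}\in I$, and then proves the two inequalities $\nu(\pi(\leadterm(\tilde{f})))\preceq\nu(\pi(f))$ (from the min-valuation inequality applied to the terms of $\tilde{f}$) and $\nu(\pi(\leadterm(\tilde{f})))\not\prec\nu(\pi(f))$ (by showing the strict inequality would put $\leadmonomial(\tilde{f})$ in $\leadterm(I)$, contradicting standardness). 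Your proof instead isolates the key claim that $\nu\circ\pi$ is injective on standard monomials, derives the lemma from this via the strict ultrametric equality, and then establishes the key claim by subducting $g^\beta - cg^\alpha$ and lifting to a relation $P\in I$. The two approaches use the same engine --- subduction plus the one-dimensional-leaves hypothesis, both delivering a nonzero element of $I$ whose leading monomial is forced to be a supposed standard monomial --- but yours factors the argument through an explicit, reusable statement that the paper only realizes implicitly later (it is essentially what underlies~\Cref{cor:notminimal}). The trade-off: your route gives a cleaner invariant to cite, while the paper's is shorter and avoids the small case analysis around $h=0$ that your key-claim proof silently absorbs. Both are sound.
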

\begin{proof}
Using subduction, we write $\pi(f)=\sum_{\alpha\in\bN^m}c_\alpha g^\alpha$ as a finite sum.  By construction, there is exactly one $\alpha$ with $c_\alpha\not=0$ and $\nu(g^\alpha)=\nu(\pi(f))$.  For all other $\alpha$, either $c_\alpha=0$ or $\nu(g^\alpha)\succ \nu(\pi(f))$.  We define $\overline{f}=\sum_{\alpha\in\bN^m}c_\alpha x^\alpha$.  By the properties of subduction and the fact that the monomial order uses the valuation to order, we observe that $\nu(\pi(\leadterm(\overline{f})))=\nu(\pi(f))$.

We first show that  $\nu(\pi(\leadterm(\tilde{f})))\preceq \nu(\pi(f))$ by contradiction.  If the other inequality held, then every term of $\tilde{f}$ would map to an element of $A$ with valuation greater than $\nu(\pi(f))$ since the monomial order is based on valuation-order.  But then, since $\nu$ is a min-valuation, $\nu(\pi(\tilde{f}))\succ \nu(\pi(f))$, which is not possible.

On the other hand, if $\nu(\pi(\leadterm(\tilde{f})))\prec \nu(\pi(f))$, then $\leadterm(\tilde{f}-\overline{f})=\leadterm(\tilde{f})$ since the monomial order $>$ is based on the valuation order.  However, $\tilde{f}-\overline{f}\in I$, which contradicts the fact that $\tilde{f}$ is a linear combination of standard monomials.  
\end{proof}

In the previous proof, we note that generally $\tilde{f}\ne \overline{f}$. 
We can only conclude that the images of their leading terms under $\pi $ must have the same valuation.  This relationship can be seen more precisely in the following lemma, which shows that the filtrations of $R/I$ and $A$ are compatible with the quotient $\pi$.

\begin{lemma}
With the standing hypotheses and notation as above,
$$F_{\leq\alpha}(R/I)=\left\{[f]\in R/I:\pi(f)\in F_{\succeq\nu(\pi(x^\alpha))}(A)\right\}.$$
\end{lemma}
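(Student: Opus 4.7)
My plan is to reduce both sides of the claimed equality to statements comparing values of $\nu \circ \pi$ on monomials, using Lemma \ref{lem:leadtermequality} as the key translation tool. For $[f] = 0$ both sides trivially contain the zero element, so assume $[f] \ne 0$. Since Lemma \ref{lem:leadtermequality} gives $\nu(\pi(f)) = \nu(\pi(\leadterm(\tilde f)))$, membership in either side reduces to comparing $\nu(\pi(\leadterm(\tilde f)))$ with $\nu(\pi(x^\alpha))$. The forward inclusion is then immediate: if $\leadterm(\tilde f) \leq x^\alpha$, the definition of the induced monomial order $<$ forces $\nu(\pi(\leadterm(\tilde f))) \succeq \nu(\pi(x^\alpha))$, whether the comparison is decided by valuation or by the tie-breaker $<'$.

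The reverse inclusion requires more care. If the valuation inequality $\nu(\pi(f)) \succeq \nu(\pi(x^\alpha))$ is strict, then by definition of $<$ we get $\leadterm(\tilde f) < x^\alpha$ immediately, and we are done. The subtle case is equality of valuations, where nothing in the definition of $<$ prevents the tie-breaker from placing $\leadterm(\tilde f)$ strictly above $x^\alpha$. To rule this out, I would first establish the auxiliary claim: \emph{distinct standard monomials for $I$ have distinct values under $\nu \circ \pi$}. Suppose $x^\gamma \ne x^\delta$ were both standard with $\nu(\pi(x^\gamma)) = \nu(\pi(x^\delta))$; the one-dimensional leaves hypothesis then supplies $c \in k^\times$ with $\nu(\pi(x^\gamma - c x^\delta)) \succ \nu(\pi(x^\gamma))$. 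But $x^\gamma - c x^\delta$ is a $k$-linear combination of standard monomials, hence coincides with its own normal form, so applying Lemma \ref{lem:leadtermequality} to this class forces $\nu(\pi(x^\gamma - c x^\delta)) = \nu(\pi(\leadterm(x^\gamma - c x^\delta))) = \nu(\pi(x^\gamma))$, a contradiction.

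With the uniqueness claim in hand, the equality case is finished by comparing $\leadterm(\tilde f)$ with $\leadterm(\widetilde{x^\alpha})$. Applying Lemma \ref{lem:leadtermequality} to $[x^\alpha]$ shows $\leadterm(\widetilde{x^\alpha})$ is a standard monomial with the same $\nu \circ \pi$ value as $x^\alpha$, and reduction to normal form guarantees $\leadterm(\widetilde{x^\alpha}) \leq x^\alpha$. Since $\leadterm(\tilde f)$ is likewise a standard monomial sharing that value, the uniqueness claim forces $\leadterm(\tilde f) = \leadterm(\widetilde{x^\alpha}) \leq x^\alpha$, giving $[f] \in F_{\leq \alpha}(R/I)$. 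The main obstacle throughout is precisely this tie-breaker case; everything else is a mechanical unpacking of definitions, and the crux is leveraging the one-dimensional leaves hypothesis together with Lemma \ref{lem:leadtermequality} to preclude collisions of $\nu \circ \pi$ values among standard monomials.
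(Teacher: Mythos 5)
Your proof is correct and takes a genuinely different route from the paper's. For the reverse inclusion in the case $\nu(\pi(f)) = \nu(\pi(x^\alpha))$, the paper uses subduction to construct an explicit element of $I$, namely $\tilde f - \lambda x^\alpha - h$, and shows its leading term equals $\leadterm(\tilde f)$, contradicting the fact that $\tilde f$ is a normal form. You instead extract and prove a clean auxiliary fact --- that distinct standard monomials cannot share a value of $\nu \circ \pi$ --- and then use this together with $\leadterm(\widetilde{x^\alpha}) \leq x^\alpha$ to pin down $\leadmonomial(\tilde f)$ directly. Both approaches hinge on the one-dimensional leaves hypothesis, but your route isolates a reusable statement (essentially that the monomial order restricted to standard monomials is compatible with $\nu \circ \pi$ injectively per value) and avoids re-running subduction inside the argument; the paper's route is more self-contained but more procedural.

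One small gap: when you invoke one-dimensional leaves to produce $c \in k^\times$ with $\nu(\pi(x^\gamma - c x^\delta)) \succ \nu(\pi(x^\gamma))$, the hypothesis actually allows a second alternative, namely $\pi(x^\gamma - c x^\delta) = 0$. You should rule this out explicitly: since $x^\gamma - c x^\delta$ is a nonzero $k$-linear combination of standard monomials and the standard monomials form a $k$-basis of $R/I$, this element cannot lie in $I$, so $\pi(x^\gamma - c x^\delta) \neq 0$. With that sentence added, the auxiliary claim and hence the whole argument is airtight.
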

\begin{proof}
By definition, $[f]\in F_{\leq\alpha}$ if and only if $\leadterm(\tilde{f})\leq x^\alpha$.  By the construction of the monomial order, this is true if and only if either $\nu(\pi(\leadterm(\tilde{f})))\succ\nu(\pi(x^\alpha))$ or $\nu(\pi(\leadterm(\tilde{f})))=\nu(\pi(x^\alpha))$ and $\leadterm(\tilde{f})\leq' x^\alpha$.  By Lemma \ref{lem:leadtermequality}, $\nu(\pi(f))=\nu(\pi(\leadterm(\tilde{f})))$, and, in either case, $\nu(\pi(f))\succeq \nu(\pi(x^\alpha))$, that is, $\pi(f)\in F_{\succeq\nu(\pi(x^\alpha))}(A).$

On the other hand, suppose that $\pi(f)\in F_{\succeq\nu(\pi(x^\alpha))}(A),$ but $[f]\not\in F_{\leq\alpha}$.  This implies that $\nu(\pi(f))\succeq \nu(\pi(x^\alpha))$, but $\leadterm(\tilde{f})>x^\alpha$.  By the construction of the monomial order and Lemma \ref{lem:leadtermequality}, if $\nu(\pi(f))\succ \nu(\pi(x^\alpha))$, then $\leadterm(\tilde{f})<x^\alpha$. It then follows that $\nu(\pi(f))= \nu(\pi(x^\alpha))$ and $\leadterm(\tilde{f})>'x^\alpha$.

Since $\nu$ has one-dimensional leaves, there is some $\lambda\in k$ so that either $\nu(\pi(f)-\lambda\pi(x^\alpha))\succ\nu(\pi(x^\alpha))$ or $\pi(f)-\lambda\pi(x^\alpha)=0$.  Using subduction, we write $\pi(f)-\lambda\pi(x^\alpha)=\sum_{\beta\in\bN^m}c_\beta g^\beta$ as a finite sum.  By the properties of subduction, for all $\beta$ with $c_\beta\not=0$, $\nu(\pi(f)-\lambda\pi(x^\alpha))\preceq \nu(g^\beta)$.  Let $h=\sum_{\beta\in\bN^m}c_\beta x^\beta$.  Then, by construction, $\tilde{f}-\lambda x^\alpha-h\in I$.

We now show that $\leadterm(\tilde{f}-\lambda x^\alpha-h)=\leadterm(\tilde{f})$.  In particular, since $\leadterm(\tilde{f})>x^\alpha$, $\lambda x^\alpha$ cannot be the leading term of this sum.  In addition, by the properties of subduction, for all $\beta$ with $c_\beta\not=0$, $\nu(\leadterm(\tilde{f}))\prec \nu(\pi(f)-\lambda\pi(x^\alpha))\preceq \nu(g^\beta)$.  This implies that $\leadterm(\tilde{f})>c_\beta x^\beta$ whenever $c_\beta\not=0$.  Therefore, no $c_\beta x^\beta$ can be the leading term of $\tilde{f}-\lambda x^\alpha-h$.

Since $\leadterm(\tilde{f}-\lambda x^\alpha-h)=\leadterm(\tilde{f})$ and $\tilde{f}-\lambda x^\alpha-h\in I$, $\leadmonomial(\tilde{f})$ cannot be a standard monomial of $I$.  This contradicts the definition of $\tilde{f}$, and so $[f]\in F_{\leq\alpha}$.
\end{proof}

By construction, the elements of $R/I$ are in bijective correspondence with those of $A$.  Hence, the elements of $F_{\leq\alpha}(R/I)$ are in bijective correspondence with the elements of $F_{\succeq\nu(\pi(x^\alpha))}(A)$.  By the previous result, we see that if $\nu(\pi(x^\alpha))=\nu(\pi(x^\beta))$, then $F_{\leq\alpha}(R/I)=F_{\leq\beta}(R/I)$ since the right-hand-side of the equality is the same for both $\alpha$ and $\beta$.

Suppose that $\leadmonomial(\tilde{f})=x^\alpha$ and $\nu(\pi(x^\alpha))=\nu(\pi(x^\beta))$.  Since $[f]\in F_{\leq\alpha}(R/I)=F_{\leq\beta}(R/I),$ it follows that $x^\alpha\leq x^\beta$.  In other words, $x^\alpha$ is the smallest monomial with respect to $<$ whose valuation is $\nu(\pi(x^\alpha))$.

On the other hand, suppose that $x^\beta$ is not the smallest monomial with respect to $<$ with valuation $\nu(\pi(x^\beta))$.  The observation above implies that if $\nu(\pi(f))=\nu(x^\beta)$, then $\leadterm(\tilde{f})<x^\beta$.  Therefore, $[f]\in F_{<\beta}(R/I)$.  Putting this together, we have the following conclusions:

\begin{corollary}\label{cor:notminimal}
Using the standing hypotheses and notation as above, suppose that $x^\alpha$ and $x^\beta$ are such that $\nu(\pi(x^\alpha))=\nu(\pi(x^\beta))$ and $x^\alpha<x^\beta$.  Then
$$
F_{\leq\alpha}(R/I)=F_{\leq\beta}(R/I)=F_{<\beta}(R/I).
$$
\end{corollary}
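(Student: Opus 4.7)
The plan is to prove the two equalities separately, with most of the heavy lifting already done by the preceding lemma that identifies $F_{\leq\gamma}(R/I)$ with $\{[f]\in R/I:\pi(f)\in F_{\succeq\nu(\pi(x^\gamma))}(A)\}$. This reduces the corollary to a short bookkeeping argument about how the monomial order $<$ on $R$ interacts with the valuation $\nu$ on $A$ pulled back through $\pi$.

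For the first equality $F_{\leq\alpha}(R/I)=F_{\leq\beta}(R/I)$, I would simply apply the preceding lemma twice. Since $\nu(\pi(x^\alpha))=\nu(\pi(x^\beta))$, the right-hand side of the lemma's characterization is literally the same subset of $R/I$ whether we start from $\alpha$ or from $\beta$. So both filtration pieces coincide without any further argument.

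For the second equality $F_{\leq\beta}(R/I)=F_{<\beta}(R/I)$, one containment is obvious. For the reverse, I would chain the inclusion with the first equality: take $[f]\in F_{\leq\beta}(R/I)=F_{\leq\alpha}(R/I)$. By definition of $F_{\leq\alpha}$, either $[f]=0$ (in which case $[f]\in F_{<\beta}$ trivially) or $\leadterm(\tilde{f})\leq x^\alpha$. Using the hypothesis $x^\alpha<x^\beta$, this gives $\leadterm(\tilde{f})\leq x^\alpha<x^\beta$, so $\leadterm(\tilde{f})<x^\beta$, and hence $[f]\in F_{<\beta}(R/I)$.

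There is no real obstacle here, since the corollary is essentially packaging the observations from the paragraph that precedes its statement; the conceptual content lies in \Cref{lem:leadtermequality} and the filtration lemma, both of which have been established already. The only minor point to watch is handling $[f]=0$ correctly (the filtrations include $0$ by definition), but this requires nothing beyond a one-line remark.
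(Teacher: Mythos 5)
Your proposal is correct and follows essentially the same route as the paper, which derives this corollary from the discussion following the filtration lemma rather than giving it a displayed proof. The first equality is obtained identically, by applying the characterization $F_{\leq\gamma}(R/I)=\{[f]:\pi(f)\in F_{\succeq\nu(\pi(x^\gamma))}(A)\}$ to both $\gamma=\alpha$ and $\gamma=\beta$ and noting the right sides agree. For the second equality, the paper frames the argument in terms of $\leadmonomial(\tilde{f})$ being the \emph{smallest} monomial with its valuation (so that $x^\beta$, not being smallest, cannot occur as a leading monomial), whereas you chain $\leadterm(\tilde{f})\leq x^\alpha<x^\beta$ directly from $F_{\leq\beta}=F_{\leq\alpha}$. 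Your version is a touch more direct and also automatically covers elements $[f]$ with $\nu(\pi(f))\succ\nu(\pi(x^\beta))$, which the paper's informal paragraph leaves implicit; the content is the same, and your handling of $[f]=0$ is a sound, if minor, piece of bookkeeping.
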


\begin{corollary}\label{cor:minimal}
Using the standing hypotheses and notation as above, suppose that $x^\alpha$ is the smallest monomial with respect to $<$ with valuation $\nu(\pi(x^\alpha))$.  Then,
$$
F_{<\alpha}(R/I)=\{[f]\in R/I:\pi(f)\in F_{\succ(\pi(x^\alpha))}(A)\}.
$$
\end{corollary}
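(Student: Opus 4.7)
The plan is to derive Corollary \ref{cor:minimal} directly from the unnamed lemma immediately preceding Corollary \ref{cor:notminimal} by tracking precisely when the non-strict containment $[f] \in F_{\leq\alpha}(R/I)$ can be strengthened to $[f] \in F_{<\alpha}(R/I)$. The bridge between the monomial-indexed filtration on $R/I$ and the valuation-indexed filtration on $A$ is already encoded in Lemma \ref{lem:leadtermequality}, so the argument reduces to careful bookkeeping around the minimality hypothesis on $x^\alpha$.

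For the inclusion $\subseteq$, I would take $[f] \in F_{<\alpha}(R/I)$, so that $\leadmonomial(\tilde f) < x^\alpha$ and in particular $[f] \in F_{\leq \alpha}(R/I)$. The preceding lemma then yields $\pi(f) \in F_{\succeq \nu(\pi(x^\alpha))}(A)$, and to upgrade this to the strict filtration I would argue by contradiction. If $\nu(\pi(f)) = \nu(\pi(x^\alpha))$, then Lemma \ref{lem:leadtermequality}, together with the fact that $\nu$ lifts the trivial valuation on $k^\times$, forces $\nu(\pi(\leadmonomial(\tilde f))) = \nu(\pi(x^\alpha))$. This exhibits $\leadmonomial(\tilde f)$ as a monomial strictly below $x^\alpha$ in $<$ that shares the same value under $\nu\circ\pi$, directly contradicting the minimality of $x^\alpha$.

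For the inclusion $\supseteq$, the minimality hypothesis is not needed. Given $[f]$ with $\pi(f) \in F_{\succ \nu(\pi(x^\alpha))}(A)$, the preceding lemma applied to the weaker containment $\pi(f) \in F_{\succeq \nu(\pi(x^\alpha))}(A)$ places $[f] \in F_{\leq \alpha}(R/I)$, so $\leadmonomial(\tilde f) \leq x^\alpha$. If equality held, Lemma \ref{lem:leadtermequality} would give $\nu(\pi(f)) = \nu(\pi(x^\alpha))$, contradicting the strict valuation inequality. Hence $\leadmonomial(\tilde f) < x^\alpha$, i.e., $[f] \in F_{<\alpha}(R/I)$.

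I do not expect a genuine obstacle here; the heavy lifting has been done in the unnamed preceding lemma (where the one-dimensional leaves hypothesis and subduction were used) and in Lemma \ref{lem:leadtermequality}. The only point requiring attention is invoking minimality of $x^\alpha$ exclusively in the forward direction: this hypothesis is exactly what rules out a competing monomial strictly below $x^\alpha$ in $<$ that maps under $\nu\circ\pi$ to the same value, which is the only obstruction to promoting $F_{\leq\alpha}$ to $F_{<\alpha}$ on the left and $F_{\succeq}$ to $F_{\succ}$ on the right.
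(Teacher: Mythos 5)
Your proof is correct and follows essentially the same route as the paper: starting from the unnamed lemma that identifies $F_{\leq\alpha}(R/I)$ with $\{[f]:\pi(f)\in F_{\succeq\nu(\pi(x^\alpha))}(A)\}$, then using Lemma \ref{lem:leadtermequality} together with the minimality of $x^\alpha$ to match up the boundary sets $\{\leadmonomial(\tilde f)=x^\alpha\}$ and $\{\nu(\pi(f))=\nu(\pi(x^\alpha))\}$. Your explicit two-inclusion bookkeeping, including noting that minimality is only needed in the forward direction, is a correct and slightly more streamlined presentation of what the paper does in the surrounding discussion.
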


In the case of Corollary \ref{cor:notminimal}, we see that $F_{\leq\beta}(R/I)/F_{<\beta}(R/I)$ is trivial.  On the other hand, in the case of Corollary \ref{cor:minimal}, we conclude that
$$
F_{\leq\alpha}(R/I)/F_{<\alpha}(R/I)\simeq F_{\succeq\nu(\pi(x^\alpha))}(A)/F_{\succ\nu(\pi(x^\alpha))}(A).
$$
These two observations directly imply that the associated graded algebras for both $R/I$ under $<$ and $A$ under $\succ$ are equal.  Collecting these results, we have the following:

\begin{theorem}\label{thm:khovanskiirealization}
Consider a valued algebra $\nu:A\setminus\{0\}\rightarrow\bQ^r$ satisfying the standing hypotheses, and suppose that $A$ has a finite Khovanskii basis with respect to $\nu.$ 
Then there is a polynomial ring $R$, ideal $I$ of $R$, and monomial order $<$ such that that
\[
\text{\emph{(1)} } R/I\simeq A\text{, and} \quad
\text{\emph{(2)} } \gr_{\nu}(A)\simeq\gr_{<}(R/I).
\]
\end{theorem}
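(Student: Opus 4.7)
The plan is to build the data $(R, I, <)$ directly from the finite Khovanskii basis and then deduce both claims from the lemmas and corollaries immediately preceding the statement. Given the finite Khovanskii basis $\{g_1,\dots,g_m\}$ of $A$, I would let $R \vcentcolon= k[x_1,\dots,x_m]$, define the presentation map $\pi\colon R \to A$ by $\pi(x_i) = g_i$, set $I \vcentcolon= \ker \pi$, and equip $R$ with the monomial order $<$ induced by $\nu$ as in the Definition just after the standing hypotheses. Part (1), namely $R/I \simeq A$, is then immediate from the first isomorphism theorem, together with the observation that $\pi$ is surjective since $\{g_1,\dots,g_m\}$ (in particular) generates $A$ as a $k$-algebra.

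For part (2), I would decompose $\gr_<(R/I) = \bigoplus_{\alpha \in \bN^m} F_{\leq\alpha}(R/I)/F_{<\alpha}(R/I)$ and examine each summand via whether $x^\alpha$ is the $<$-minimal monomial with its $\nu$-value. If $x^\alpha$ is not minimal, Corollary~\ref{cor:notminimal} shows $F_{\leq\alpha}(R/I) = F_{<\alpha}(R/I)$, so the summand vanishes. If $x^\alpha$ is minimal, Corollary~\ref{cor:minimal} together with the filtration compatibility lemma yields the identification
\[
F_{\leq\alpha}(R/I)/F_{<\alpha}(R/I) \;\simeq\; F_{\succeq \nu(\pi(x^\alpha))}(A)/F_{\succ \nu(\pi(x^\alpha))}(A),
\]
where the map is induced by $\pi$. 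Summing across the surviving values of $\alpha$, each $\nu$-value $a$ in the image of $\nu$ appears exactly once (namely for the $<$-minimal $\alpha$ with $\nu(\pi(x^\alpha)) = a$), because the one-dimensional leaves hypothesis together with the maximum-well-ordered image guarantees a canonical bijection between surviving $\alpha$'s and values in $\nu(A \setminus \{0\})$. This yields a $k$-linear isomorphism $\gr_<(R/I) \to \gr_\nu(A)$.

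To upgrade this to an isomorphism of graded $k$-algebras, I would verify compatibility with multiplication using the identity $\nu(\pi(x^\alpha x^\beta)) = \nu(\pi(x^\alpha)) + \nu(\pi(x^\beta))$, together with the way $<$ was defined to refine $-\nu \circ \pi$: the product in $\gr_<(R/I)$ of two nonzero classes (represented by $<$-minimal monomials with their respective valuations) lies in the summand indexed by the $<$-minimal monomial whose valuation is the sum, which maps under $\pi$ exactly to the product of the corresponding classes in $\gr_\nu(A)$.

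The main obstacle I anticipate is the bookkeeping in the previous paragraph: matching the $\bN^m$-grading on $\gr_<(R/I)$ with the $\nu(A \setminus \{0\})$-grading on $\gr_\nu(A)$. Because many monomials can share a valuation, one must argue that collapsing the monomials grouped by $\nu$-value is compatible both with the direct sum decomposition and with multiplication; Corollaries~\ref{cor:notminimal} and \ref{cor:minimal} handle the vector space side, and the lift $\nu(f g) = \nu(f) + \nu(g)$ handles the multiplicative side. Everything else is a routine assembly of the lemmas already in hand.
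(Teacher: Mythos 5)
Your proposal is correct and follows essentially the same route as the paper, which itself frames the theorem as a corollary of Corollaries~\ref{cor:notminimal} and~\ref{cor:minimal} and the preceding filtration lemma ("Collecting these results, we have the following"). One small imprecision: the fact that every $a \in \nu(A\setminus\{0\})$ arises as $\nu(\pi(x^\alpha))$ for some (hence a $<$-minimal) $\alpha$ follows from Lemma~\ref{lem:leadtermequality} (which relies on the Khovanskii-basis assumption via subduction) together with $<$ being a well-order, rather than from one-dimensional leaves and the maximum-well-orderedness of $\nu$'s image as you state; the one-dimensional-leaves hypothesis is what ensures the graded pieces match, not that the index sets biject.
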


\begin{remark}
We focus on the case where $A$ has a finite Khovanskii basis for computational reasons, but the theory can be extended to the case where $A$ has an infinite Khovanskii basis.  Then the polynomial ring $R$ would be a (countably) infinitely-generated polynomial ring.  This would also require extending the definition of subalgebra bases from \cite{StillmanTsai:1999}.  The definitions carry over {\em mutatis mutandis}, but we leave the details to the interested reader.
\end{remark}

\begin{example}\label{ex:alternating}
Consider the following example from~\cite[Example 7.7]{KavehManon:2019}: Let $A$ be the subalgebra of \(k[z_{1}, z_{2}, z_{3}]\) consisting of polynomials that are invariant under the action of $A_3$.  That is, \(A = k[e_{1}, e_{2}, e_{3}, y]\) where
\begin{align*}
  e_{1} &= z_{1} + z_{2} + z_{3}, &e_{2} &= z_{1}z_{2} + z_{1}z_{3} + z_{2}z_{3},\\
  e_{3} &= z_{1}z_{2}z_{3},&y &= (z_{1} - z_{2})(z_{1} - z_{3})(z_{2}-z_{3}).
\end{align*}
Let \(R = k[x_{1}, x_{2}, x_{3}, x_{4}]\) be the presentation ring of \(A\) where $\pi:R\rightarrow A$ with $\pi(x_i)=e_i$ and $\pi(x_4)=y$.
The kernel of the map $\pi$ is the principal ideal $I=\langle f \rangle ,$ where
\begin{equation}f=\underline{x_{1}^{2}x_{2}^{2}} - \underline{4x_{2}^{3}} - 4x_{3}x_{1}^{3} + 18x_{1}x_{2}x_{3} - 27x_{3}^{2} - x_{4}^{2}.\label{eq:discriminant}\end{equation}
The tropical variety \(\mathcal{T}(I) \subseteq \bR^4\) contains three maximal prime cones, and hence, by~\cite[Theorem 1]{KavehManon:2019}, the set \(\left\{ e_{1}, e_{2}, e_{3}, y \right\}\) is a Khovanskii basis for each of the valuations constructed from these cones.
Moreover, none of these valuations are induced by monomial order on $k[z_1,z_2,z_3]$ since a result of G\"{o}bel implies such subalgebra bases are always infinite~\cite{Gobel:1995}.
On the other hand, any valuation $\nu :A \to \bQ^2$ constructed from these prime cones corresponds to many different monomial orders $<$ on $R$ satisfying the conclusions of~\Cref{thm:khovanskiirealization}.
Consider, for instance, the prime cone generated by the rays \(\bR_{\ge 0}( -3, -6, 14, -9)\) and \(\bR_{\ge 0} (22, -2, -3, -3)\).
A suitable monomial order $<$ can be constructed from the weight matrix
$$M = \begin{pmatrix}
0 & 2 & 2 & 3 \\
    1 & 4 & 1 & 6
\end{pmatrix},$$
where $x^{\alpha } < x^{\beta }$ if $M \alpha $ is lexicographically smaller than $M \beta ,$ and any fixed monomial order is used to break ties.

We see that \(\left\{[x_{1}], [x_{2}], [x_{3}], [x_{4}]\right\}\) forms a subalgebra basis for $R/I$, which corresponds under $\pi$ to the Khovanskii basis $\{ e_1, e_2, e_3, y\}$ for $A$. Although the valuation on $A$ is not induced by a monomial order on $k[z_1,z_2,z_3]$, there is another ring $R$ and monomial order on this ring, which does induce the valuation on $A$.
\end{example}
Two remarks further illustrate the relationship between $\nu$ and $<$.
\begin{remark}\label{rem:altvaluation}
\Cref{lem:leadtermequality} implies the following characterization (cf.~\cite[Equation (3.2)]{KavehManon:2019}) of $\nu $ in terms of the presentation ring: 
$$\nu (g) = \max\{\nu(\pi(\leadterm(h))):h\in R\text{ and }\pi(h)=g\} \text{ for } g\in A \setminus \{0\}.$$
\end{remark}
\begin{remark}\label{rem:attainedTwice}
For every nonzero $f\in I$, the two largest monomials $x^\alpha$ and $x^\beta$ of $f$ with respect to $<$ must satisfy $\nu(\pi(x^\alpha))=\nu(\pi(x^\beta))$.
In~\Cref{ex:alternating}, these are the underlined terms of Equation~\eqref{eq:discriminant}.
\end{remark}

The construction of~\Cref{thm:khovanskiirealization} assumes it is known {\em a priori} that $\{g_1,\dots,g_m\}$ forms a Khovanskii basis. Without this assumption, the construction is merely existential. We next state a criterion that can identify when $\{g_1,\dots,g_m\}$ form a Khovanskii basis.

\begin{proposition}
For a valued algebra $\nu:A\setminus\{0\}\rightarrow\bQ^r$ satisfying the standing hypotheses, let $\{g_1, \ldots, g_m\}$ be a finite set of nonzero generators for $A$.
Let $R\vcentcolon=k[x_1,\dots,x_m]$ be the presentation ring for these generators, $I$ the kernel of the presentation map, and $<$ a monomial order induced by $\nu.$  The set $\{g_1,\dots,g_m\}$ forms a Khovanskii basis for $A$ if and only if $\gr_{\nu}(A)\simeq\gr_{<}(R/I)$.
\end{proposition}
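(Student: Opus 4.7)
The forward direction follows immediately from Theorem~\ref{thm:khovanskiirealization}, which establishes exactly this isomorphism whenever $\{g_1, \ldots, g_m\}$ forms a Khovanskii basis.

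For the converse, my plan is to work with the natural algebra homomorphism $\bar{\pi}\colon \gr_<(R/I) \to \gr_\nu(A)$ induced by the presentation map $\pi$. By Remark~\ref{rem:notKhovanskii}, which does not require the Khovanskii hypothesis, the leading terms $\{\leadterm([x_i])\}$ (for $i$ in the appropriate subset $S$) generate $\gr_<(R/I)$ as a $k$-algebra. Under $\bar{\pi}$ these map to $\bar{g}_i = g_i + F_{\succ \nu(g_i)}(A)$ in $\gr_\nu(A)$, so the image of $\bar{\pi}$ is precisely the subalgebra $k[\bar{g}_1, \ldots, \bar{g}_m] \subseteq \gr_\nu(A)$. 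By definition this image equals all of $\gr_\nu(A)$ exactly when $\{g_1, \ldots, g_m\}$ is a Khovanskii basis, so it suffices to show that the hypothesis $\gr_\nu(A) \simeq \gr_<(R/I)$ forces $\bar{\pi}$ to be surjective. Since both algebras have at-most-one-dimensional graded pieces---by the one-dimensional-leaves assumption on $\nu$ and by the standard-monomial characterization for $\gr_<(R/I)$---this should follow from a piecewise comparison of graded dimensions combined with injectivity of $\bar{\pi}$: the isomorphism matches nonzero pieces in bijection, while $\bar{\pi}$'s image already hits every piece whose generator has the form $\bar{g}_i$ or a monomial in them.

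The main obstacle is establishing that $\bar{\pi}$ is well-defined and injective \emph{without} presupposing the Khovanskii hypothesis, since Lemma~\ref{lem:leadtermequality}---the key technical input in Section~\ref{sec:KhovanskiitoSubalgebra}---was proved by subduction against a Khovanskii basis. One workaround would be to define $\bar{\pi}$ only on the pieces $F_{\leq\alpha}(R/I)/F_{<\alpha}(R/I)$ indexed by $\alpha$ that are minimal in the sense of Corollary~\ref{cor:minimal} (the remaining pieces are zero by Corollary~\ref{cor:notminimal}, so this suffices), verify well-definedness and injectivity there by a direct filtration argument using only that $\nu(\pi(f))\succeq \nu(\pi(\leadterm(\tilde{f})))$ (which holds unconditionally), and then combine with the dimension matching provided by the assumed isomorphism to force surjectivity and conclude.
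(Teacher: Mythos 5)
Your proposal takes essentially the same route as the paper for both directions: the forward implication is cited from Theorem~\ref{thm:khovanskiirealization}, and for the converse you set up the natural map $\bar\pi\colon \gr_<(R/I)\to\gr_\nu(A)$ sending $[x_i]+\leadterm(I)\mapsto g_i+F_{\succ\nu(g_i)}$ and argue its surjectivity forces the initial forms $\bar g_i$ to generate. The paper's converse is terser than yours: it implicitly reads the hypothesis ``$\gr_\nu(A)\simeq\gr_<(R/I)$'' as the statement that \emph{this particular map} is an isomorphism, invokes Remark~\ref{rem:notKhovanskii} to see the generators, and concludes surjectivity at once. You go further in flagging that well-definedness and injectivity of $\bar\pi$ are not obvious without already assuming $\{g_i\}$ is a Khovanskii basis, a concern the paper does not address.

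However, the workaround you sketch does not close the gap. First, Corollary~\ref{cor:notminimal} (that the graded pieces at non-minimal $\alpha$ vanish) is derived from Lemma~\ref{lem:leadtermequality}, whose proof subducts against the Khovanskii basis; citing it here to dispose of those pieces is circular. Only the one-sided inequality $\nu(\pi(f))\succeq\nu(\pi(\leadterm(\tilde f)))$ is unconditional, and that alone does not make $\bar\pi$ well-defined at non-minimal $\alpha$. Second, the ``piecewise comparison of graded dimensions'' cannot by itself promote an injective map with image $k[\bar g_1,\dots,\bar g_m]\subseteq\gr_\nu(A)$ to a surjection: the two sides are graded by different semigroups, an abstract $k$-algebra isomorphism need not respect these gradings, and an affine domain can contain a proper subalgebra abstractly isomorphic to itself (e.g.\ $k[y^2]\subsetneq k[y]$). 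So injectivity plus an abstract isomorphism does not force $B=\gr_\nu(A)$. The clean way to finish---and this is what the paper actually does---is to interpret the hypothesized isomorphism as the natural one induced by $\pi$; then its surjectivity is immediate and the Khovanskii conclusion follows. If you want the statement to hold for an arbitrary abstract isomorphism, a substantively different argument is needed, and the paper does not provide one either.
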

\begin{proof}

By Theorem \ref{thm:khovanskiirealization}, if \(\left\{g_{1}, \ldots g_{m}\right\}\) is a Khovanskii basis of \((A, \nu)\), then  \(\gr_{\nu}(A) \cong \gr_{<}(R/I)\). On the other hand, by Remark \ref{rem:notKhovanskii} $\{[x_i]\}$ with $x_i$ a standard monomial form a subalgebra basis for $R/I$.  If the map $\leadterm(R/I) \rightarrow \gr_{\nu}(A)$ defined by $[x_i]+\leadterm(I) \mapsto g_i+F_{\succ\nu(g_i)}$ is an isomorphism, then $\{g_i+F_{\succ\nu(g_i)}\}$ generate $\gr_{\nu}(A)$. Hence $g_1,\dots, g_m$ is a Khovanskii basis.
%
\end{proof}

A Khovanskii or subalgebra basis is \emph{minimal} if none of its proper subsets form a Khovanskii or subalgebra basis for the same algebra.
Our previous constructions respect minimality.
\begin{proposition}
Let $\nu:A\setminus\{0\}\rightarrow\bQ^r$ be a valued algebra satisfying the standing hypotheses and $\{g_1,\dots,g_m\}$ be a finite Khovanskii basis for $A$.  Let $R\vcentcolon=k[x_1,\dots,x_m]$ be the presentation ring and$I$ the kernel of the presentation map.  There exists a monomial order $<$ induced from $\prec$ such that $\{g_1,\dots,g_m\}$ is a minimal Khovanskii basis if and only if $\{[x_1],\dots,[x_m]\}$ is a minimal subalgebra basis.  
\end{proposition}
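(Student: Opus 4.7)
The plan is to invoke the isomorphism $\gr_\nu(A) \cong \gr_<(R/I)$ from Theorem~\ref{thm:khovanskiirealization}, combined with the Stillman--Tsai identification $\gr_<(R/I)\cong\leadterm(R/I)$, to transport minimality between the two settings. Under this composed isomorphism, the element $g_i + F_{\succ \nu(g_i)}(A)$ corresponds to $\leadterm([x_i])$, so to identify the two generating sets literally as $\{x_i + \leadterm(I)\}_{i=1}^{m}$ the monomial order $<$ must be chosen so that each $x_i$ is a standard monomial for $I$ with respect to $<$.

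I would take $<$ to be the order induced by $\nu $ with any graded tie-breaker $<'$ (for instance, graded reverse lexicographic). For the forward direction, assume $\{g_1,\dots,g_m\}$ is a minimal Khovanskii basis. Minimality together with one-dimensional leaves forces the values $\nu(g_i)$ to be pairwise distinct: if $\nu(g_i)=\nu(g_j)$ with $i\neq j$, then the classes $g_i + F_{\succ \nu(g_i)}$ and $g_j + F_{\succ \nu(g_j)}$ lie in the same $1$-dimensional graded piece, so one is a scalar multiple of the other and is redundant. Next, I would show each $x_i$ is standard by contradiction: if some $f\in I$ has $\leadterm(f)=x_i$, Remark~\ref{rem:attainedTwice} supplies a second monomial $x^\alpha$ of $f$ with $\alpha\neq e_i$, $\nu(\pi(x^\alpha))=\nu(g_i)$, and $x^\alpha<'x_i$. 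Since $<'$ is graded, this forces $|\alpha|\leq 1$; the case $|\alpha|=0$ would require $\nu(g_i)=0$, hence $g_i\in k$, which is incompatible with minimality when $A\neq k$, while $|\alpha|=1$ gives $\alpha=e_j$ for some $j\neq i$ with $\nu(g_j)=\nu(g_i)$, contradicting distinctness. Thus every $x_i$ is standard, and the isomorphism sends the minimal generating set $\{g_i + F_{\succ \nu(g_i)}\}$ of $\gr_\nu(A)$ to the generating set $\{x_i + \leadterm(I)\}$ of $\leadterm(R/I)$, transporting minimality.

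For the converse, suppose $\{[x_1],\dots,[x_m]\}$ is a minimal subalgebra basis. If some $x_i$ were not standard, then by Remark~\ref{rem:notKhovanskii} the proper subset consisting of those $[x_j]$ with $x_j$ standard is already a subalgebra basis, contradicting minimality. Hence every $x_i$ is standard, $\leadterm([x_i])=x_i+\leadterm(I)$, and the same isomorphism carries minimality of $\{[x_i]\}$ back to minimality of $\{g_1,\dots,g_m\}$ as a Khovanskii basis. The main obstacle is the forward direction: a careless choice of tie-breaker could allow $x_i$ itself to appear as the leading monomial of a relation in $I$, destroying the clean identification of generating sets; the graded tie-breaker together with Remark~\ref{rem:attainedTwice} is what rules this out.
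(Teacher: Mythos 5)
Your proof is correct and takes a genuinely different route from the paper's. The paper argues element-by-element: it shows (for any order induced by $\nu$) that dropping $[x_i]$ from the subalgebra basis allows dropping some $g_j$ from the Khovanskii basis, and conversely that dropping $g_i$ from the Khovanskii basis allows choosing an \emph{elimination} tie-breaker $<'$ so that $[x_i]$ can be dropped from the subalgebra basis; it then iterates this drop-one-at-a-time correspondence. Your approach instead fixes a \emph{graded} tie-breaker once and for all and proves the stronger intermediate claim that minimality (on either side) forces every variable $x_i$ to be a standard monomial, so the generating sets of $\gr_\nu(A)$ and $\gr_<(R/I)$ are literally identified under the isomorphism of \Cref{thm:khovanskiirealization}, and minimality transports in one step. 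The clever point in your forward direction is that a graded tie-breaker bounds the degree of the second-largest monomial in a relation whose lead monomial is $x_i$, which (via \Cref{rem:attainedTwice}) would force $\nu(g_j)=\nu(g_i)$ for some $j\neq i$ or $\nu(g_i)=0$, both excluded by minimality and one-dimensional leaves. The trade-off: the paper's argument makes the Khovanskii-basis-element-to-subalgebra-basis-element correspondence explicit at each step (which feeds the remark after the proposition), whereas yours is shorter, needs no iteration, and identifies a concrete, uniform choice of order (any graded $<'$) that works simultaneously for both directions of the biconditional.

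One small point worth tightening in a final write-up: the step ``$\nu(g_i)=0$ hence $g_i\in k$'' deserves a sentence — it follows because $g_i + F_{\succ 0}$ and $1 + F_{\succ 0}$ both span the one-dimensional degree-$0$ piece, so $g_i-\lambda\in F_{\succ 0}$ for some $\lambda\in k$, and the maximum-well-ordering of $\nu(A\setminus\{0\})$ rules out $\nu(g_i-\lambda)\succ 0$ unless $g_i-\lambda=0$. Alternatively, one can bypass ``$g_i\in k$'' entirely and observe directly that any $g_i$ with $\nu(g_i)=0$ has graded class lying in $k\subseteq\gr_\nu(A)$ and is therefore always redundant.
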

\begin{proof}
Let $<$ be any monomial order induced by $\nu$.  Suppose there is an $i$ so that $\{[x_1],\dots,\widehat{[x_i]},\dots,[x_n]\}$ is a subalgebra basis for $R/I$.  This means that there is some $\alpha$ with $\alpha_i=0$ so that 
$\leadterm([x])^\alpha = \leadterm([x_i])$.  Since $\leadterm(I)$ is a monomial ideal, we conclude that $\leadterm(\widetilde{x}^\alpha)=\leadterm(\widetilde{x}_i)$.  We observe by Lemma \ref{lem:leadtermequality} that 
$$
\nu(\pi(x^\alpha))=\nu(\pi(\widetilde{x}^\alpha))=\nu(\pi(\leadterm(\widetilde{x}^\alpha)))=\nu(\pi(\leadterm(\widetilde{x_i})))=\nu(\pi(x_i)).
$$
Rewriting this statement in terms of Khovanskii bases gives that $\nu(g^\alpha)=\nu(g_i).$  Since $\nu$ has one-dimensional leaves, there is some $\lambda$ so that either $\nu(\lambda g^\alpha-g_i)\succ \nu(g_i)$ or $\lambda g^\alpha-g_i=0$.  In other words, $\lambda g^\alpha$ and $g_i$ have the same image in the associated graded algebra.  Since $\lambda g^\alpha$ does not involve $g_i$, we conclude that $\{g_1,\dots,\hat{g}_i\dots,g_m\}$ generates the same image in the associated graded, and these elements also form a Khovanskii basis.


On the other hand, suppose that $\{g_1,\dots,\hat{g}_i,\dots,g_m\}$ is a Khovanskii basis.  By applying subduction to $g_i$, we have $g_i=\sum_{\alpha\in\bN^m}c_\alpha g^\alpha$ as a finite sum such that for every $\alpha$ with $c_\alpha\not=0$, we have $\alpha_i=0$.  By the properties of subduction, for each $\alpha$ with $c_\alpha\not=0$, $\nu(g^\alpha)\succeq \nu(g_i)$.  Moreover, by the properties of the valuation, there is a unique $\beta$ with $c_\beta\not=0$ where equality is attained.  Now, consider the polynomial $f=x_i-\sum_{\alpha\in\bN^m}c_\alpha x^\alpha$.  By construction, $f\in I$ and $\leadterm(f)$ is either $c_\beta x^\beta$ or $x_i$.  Since the valuation of the images of these terms is the same, their order is determined by the fixed tie-breaking monomial order $<'$ on $R$.  We may choose the tie-breaking order to have $x_i>'c_\beta x^\beta$, for instance, using an elimination order. Since $x_i$ is the leading term of $f$, $x_i$ is not a standard monomial, and, by the argument preceding Remark \ref{rem:notKhovanskii}, $\leadterm([x_i])\not=x_i+\leadterm(I)$ and $[x_i]$ can be dropped from the subalgebra basis.  We iteratively apply this procedure, dropping one term of the Khovanskii basis and a corresponding subalgebra basis generator until both are minimal.
\end{proof}

\begin{remark}
The proof above shows that when $<$ is chosen appropriately, the Khovanskii basis and subalgebra basis elements are in bijective correspondence with each other.
\end{remark}

\section{Subalgebra bases as Khovanskii bases}\label{sec:subalgebra-to-khovanskii}
Let $R\vcentcolon=k[x_1,\dots,x_m]$.  Suppose that $R/I$ is a finitely generated $k$-algebra and domain with a monomial order $<$.  In this case, we can apply the theory of Khovanskii bases directly to $R/I$, provided we can find a suitable valuation on $R/I$.  

A motivating attempt would be to use $\tilde{\mu}:R/I\setminus\{[0]\}\rightarrow \bZ^m$ defined as $[f]\mapsto -\exponent(\leadmonomial(\tilde{f})),$ where $\exponent$ denotes the exponent of the input monomial.  In many cases, however, this is not a valuation.  In particular, suppose that $x^\alpha$ and $x^\beta$ are standard monomials, but their product $x^{\alpha+\beta}$ is not a standard monomial.  In this case,
$\tilde{\mu}([x^\alpha])+\tilde{\mu}([x^\beta])=-(\alpha+\beta),$ but this does not equal $\tilde{\mu}([x^{\alpha+\beta}])$ since $\leadmonomial(\widetilde{x^{\alpha+\beta}})\not=x^{\alpha+\beta}$.  We proceed to fix this deficiency.

\begin{definition}
  \label{def:toricExponents}
  Let $f\in R$ be nonzero and not a monomial.  Suppose that the two largest leading monomials of $f$ with respect to $<$ are $x^{\alpha_1}$ and $x^{\alpha_2}$, with $x^{\alpha_1}>x^{\alpha_2}$ (cf.~\Cref{rem:attainedTwice}).  We define the \emph{toric exponent} of $f$ to be $\torexp(f) = \alpha_1 - \alpha_2 \in \bZ^m$.
  %
\end{definition}
The key object in our construction is the following lattice:
$$
K\vcentcolon=\mathbb{Z}\{\torexp(f):f\in I\}\subseteq\mathbb{Z}^m.
$$
We then define the \emph{torsion-free} portion of $\mathbb{Z}^m/K$ as $$\torfree(\mathbb{Z}^m/K)\vcentcolon=(\mathbb{Z}^m/K)/\tor(\mathbb{Z}^m/K).$$
From this, we define the map 
$
\mu:R/I\setminus\{[0]\}\rightarrow \torfree(\mathbb{Z}^m/K)
$
where $\mu([f])$ maps to the image of $\tilde{\mu}([f])$ in this quotient.  We now define an order on the image of $\mu$.  In particular, suppose that $\mathfrak{a}$ and $\mathfrak{b}$ are in the image of $\mu$.  We say that $\mathfrak{a}\prec\mathfrak{b}$ if the smallest monomial $x^\alpha$ with $\mu([x^\alpha])=\mathfrak{a}$ is greater than the smallest monomial $x^\beta$ with $\mu([x^\beta])=\mathfrak{b}$.

We observe that the monomial $x^\alpha$, as defined above, is a standard monomial.  In particular, for any $[f]\in\mu^{-1}(\mathfrak{a})$, it follows that $[\leadterm(\tilde{f})]\in\mu^{-1}(\mathfrak{a})$.  We note that $\leadterm(\tilde{f})$ is both a standard monomial and smaller than $\leadterm(f)$. 

\begin{theorem}\label{thm:whenSAGBIisKhovanskii}
  Let \(R := k[x_{1}, \ldots, x_{m}]\) with monomial order \(<\).  Suppose $I$ is a prime, monomial-free ideal of $R$. Let $K$ be defined as above. Define $\mu:R/I\rightarrow \torfree(\mathbb{Z}^m/K)$ as above.  If for every nonzero $\mathfrak{a}\in\torfree(\mathbb{Z}^m/K)$ in the image of $\mu$ there is a unique standard monomial $x^\alpha$ such that $\mu([x^\alpha])=\mathfrak{a}$, then $\mu$ is a valuation on $R/I$.
\end{theorem}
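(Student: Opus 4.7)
The plan is to verify the three min-valuation axioms: (i) $\mu(c) = 0$ for $c \in k^\times$; (ii) multiplicativity $\mu([fg]) = \mu([f]) + \mu([g])$; and (iii) the ultrametric $\mu([f+g]) \succeq \min(\mu([f]), \mu([g]))$ whenever the relevant elements are nonzero. Axiom (i) is immediate, since a nonzero constant is its own normal form with leading monomial $1$. The key preliminary observation is that $\mu([x^\sigma]) = [-\sigma]$ in $\torfree(\mathbb{Z}^m/K)$ for \emph{every} monomial $x^\sigma$, standard or not: writing $x^\delta = \leadmonomial(\widetilde{x^\sigma})$, the polynomial $x^\sigma - \widetilde{x^\sigma} \in I$ has toric exponent $\sigma - \delta \in K$, so $[-\sigma] = [-\delta] = \mu([x^\sigma])$. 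Monomial multiplicativity $\mu([x^\sigma x^\tau]) = \mu([x^\sigma]) + \mu([x^\tau])$ follows immediately.

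For (iii) I would exploit the linearity of normal form: $\widetilde{f+g} = \tilde{f} + \tilde{g}$. If $\leadmonomial(\tilde{f}) \neq \leadmonomial(\tilde{g})$, the leading of the sum is the $<$-maximum and $\mu([f+g]) = \min_\prec(\mu([f]), \mu([g]))$; if the leading monomials agree, either the leading coefficients cancel (forcing $\mu([f+g]) \succ \mu([f]) = \mu([g])$) or they do not (yielding equality). Either way $\mu([f+g]) \succeq \min$.

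For (ii) I would expand $\tilde{f} = \sum c_i m_i$ and $\tilde{g} = \sum d_j n_j$ with standard monomials arranged in decreasing order, so that $m_1 = \leadmonomial(\tilde{f})$ and $n_1 = \leadmonomial(\tilde{g})$. By linearity, $\widetilde{fg} = \sum c_i d_j \widetilde{m_i n_j}$, and the goal is to show that the leading term of $\widetilde{fg}$ comes uniquely from the $(1,1)$-summand, so that $\leadmonomial(\widetilde{fg}) = \leadmonomial(\widetilde{m_1 n_1})$ and hence $\mu([fg]) = \mu([m_1 n_1]) = \mu([m_1]) + \mu([n_1]) = \mu([f]) + \mu([g])$.

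The main obstacle is translation invariance of $\prec$ on the image of $\mu$: for standard monomials $p > q$ and any standard monomial $r$, one must show $\leadmonomial(\widetilde{pr}) > \leadmonomial(\widetilde{qr})$. By the uniqueness hypothesis, $\mu([pr])$ and $\mu([qr])$ differ by the nonzero $\mu([p]) - \mu([q])$, so the two leading monomials are distinct; the delicate step is ruling out the reversed inequality. I would prove this by showing that the image of $\mu$ is a cancellative, max-well-ordered sub-monoid of $\torfree(\mathbb{Z}^m/K)$ whose order $\prec$ extends uniquely to a translation-invariant total order on the subgroup it generates. Granted translation invariance, $\mu([m_i n_j]) \succ \mu([m_1 n_1])$ strictly for $(i,j) \neq (1,1)$, so every standard monomial appearing in $\widetilde{m_i n_j}$ for such $(i,j)$ is strictly smaller than $\leadmonomial(\widetilde{m_1 n_1})$ in the monomial order. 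Consequently $c_1 d_1 \leadmonomial(\widetilde{m_1 n_1})$ survives in $\widetilde{fg}$ with nonzero coefficient, completing the proof of multiplicativity.
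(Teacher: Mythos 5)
Your proposal follows the same overall plan as the paper's proof: the preliminary observation that $\mu([x^\sigma])=[-\sigma]$ for every monomial $x^\sigma$ (via $\torexp(x^\sigma-\widetilde{x^\sigma})\in K$), the ultrametric inequality via linearity of the normal-form map, and multiplicativity via the toric exponent of $x^{\alpha+\beta}-\widetilde{x^{\alpha+\beta}}\in I$. Your handling of axioms (i) and (iii) is correct and matches the paper. You also correctly isolate the crux of multiplicativity: in the expansion $\widetilde{f g}=\sum_{i,j} c_i d_j\,\widetilde{m_i n_j}$ one must show that the leading monomial comes from the $(1,1)$-summand, and that this is equivalent to the order $\prec$ on the image of $\mu$ being translation-invariant.

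The gap is in how you propose to handle that last step. You write that you would ``show that the image of $\mu$ is a cancellative, max-well-ordered sub-monoid of $\torfree(\mathbb{Z}^m/K)$ whose order $\prec$ extends uniquely to a translation-invariant total order on the subgroup it generates.'' But a total order on a cancellative commutative monoid extends to a translation-invariant order on its Grothendieck group only if the order is already translation-invariant on the monoid: one needs $\mathfrak{a}\prec\mathfrak{b}\Rightarrow\mathfrak{a}+\mathfrak{c}\prec\mathfrak{b}+\mathfrak{c}$ for $\mathfrak{a},\mathfrak{b},\mathfrak{c}$ in the image. Concretely that condition is exactly the statement that for standard monomials $p>q$ and $r$ one has $\leadmonomial(\widetilde{pr})>\leadmonomial(\widetilde{qr})$, which is precisely what you are trying to establish. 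So the proposed argument is circular; no actual proof of translation-invariance is supplied. (For what it's worth, the paper's own proof elides the same point: in the non-standard case it writes $h = x^{\alpha+\beta}-\widetilde{x^{\alpha+\beta}} = x^{\alpha+\beta}-x^{\gamma}-\cdots$ with $x^\gamma=\leadmonomial(\widetilde{f_1 f_2})$, which tacitly identifies $\leadmonomial(\widetilde{f_1 f_2})$ with $\leadmonomial(\widetilde{x^{\alpha+\beta}})$; as written this establishes multiplicativity only for monomials, and the passage to general $f_1,f_2$ is the same missing step you flag. So you have found the delicate point, but you have not closed it.) To repair this you would need a genuine argument that the uniqueness-of-standard-monomial hypothesis forces $\prec$ to be compatible with addition on the image monoid, for instance a direct combinatorial argument using $\torexp$ of suitable elements of $I$ built from $pr-\widetilde{pr}$ and $qr-\widetilde{qr}$, rather than an appeal to a general extension theorem.
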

\begin{proof}
Suppose that $[f_1],[f_2]\in R/I$, and let $x^{\alpha_i}=\leadmonomial(\widetilde{f_i}).$  Since our monomial orders use the maximum convention, $$
\leadmonomial(\widetilde{f_1+f_2})=\leadmonomial(\widetilde{f_1}+\widetilde{f_2})\leq\max\{x^{\alpha_1},x^{\alpha_2}\}.
$$
Rewriting this in terms of $\mu$, it follows that $$\mu([f_1+f_2])\succeq\min\{\mu([f_1]),\mu([f_2])\}.$$

Now, suppose that $\leadmonomial(\widetilde{f_1f_2})=x^\gamma$.  If $x^{\alpha+\beta}$ is a standard monomial, then $x^\gamma=x^{\alpha+\beta}$, and $\mu([f_1f_2])$ is the image of $\alpha+\beta$ in $\torfree(\mathbb{Z}^m/K)$, which is the sum of the images of $\alpha$ and $\beta$.

On the other hand, if $x^{\alpha + \beta}$ is not a standard monomial, then we consider 
\[h = x^{\alpha + \beta} - \widetilde{x^{\alpha + \beta}} = x^{\alpha + \beta} - x^{\gamma} - \text{lower order terms}\]
    Note that \(h\) is in \(I\), and it is neither zero nor a monomial.  Therefore, \(\torexp(h) = \alpha + \beta - \gamma\) is in \(K\).  From this, it follows that \(\mu([f_{1}]) + \mu([f_{2}]) = \mu([f_{1}][f_{2}])\).
\end{proof}

\begin{remark}
In the statement of~\Cref{thm:whenSAGBIisKhovanskii}, the assumption on the unique standard monomial in the preimages of $\mu$ is a version of the one-dimensional leaves assumption.
\end{remark}

\begin{corollary}\label{cor:khovanskii-from-sagbi}
Let \(R := k[x_{1}, \ldots, x_{m}]\) with monomial order \(<\).  Suppose $I$ is a prime, monomial-free ideal of $R$. Let $K$ be defined as above. Define $\mu:R/I\rightarrow \torfree(\mathbb{Z}^m/K)$ as above.  Suppose that for every nonzero $\mathfrak{a}\in\torfree(\mathbb{Z}^m/K)$ in the image of $\mu$ there is a unique standard monomial $x^\alpha$ such that $\mu([x^\alpha])=\mathfrak{a}$. Then $\{ [x_1], \ldots , [x_m]\}$ is a Khovanskii basis with respect to $\mu.$
\end{corollary}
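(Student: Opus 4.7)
My plan is to reduce this to \Cref{thm:whenSAGBIisKhovanskii} together with a one-step subduction argument. The theorem already tells us $\mu$ is a valuation on $R/I$, so $\gr_{\mu }(R/I)$ is defined and it suffices to show that the images $[x_i] + F_{\succ \mu ([x_i])}(R/I)$, for $i = 1, \ldots , m$, generate $\gr_{\mu }(R/I)$ as a $k$-algebra. Equivalently, I want to show that every leaf $F_{\succeq \mathfrak{a}}/F_{\succ \mathfrak{a}}$ is spanned by the image of a monomial in the $[x_i]$'s.

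Fix a nonzero $\mathfrak{a}$ in the image of $\mu $, and let $x^\alpha $ be the unique standard monomial with $\mu ([x^\alpha ]) = \mathfrak{a}$ supplied by the hypothesis. For any $[f] \in R/I$ with $\mu ([f]) = \mathfrak{a}$, the observation immediately preceding \Cref{thm:whenSAGBIisKhovanskii} says that $[\leadterm(\tilde{f})]$ lies in $\mu ^{-1}(\mathfrak{a})$ and its monomial part $\leadmonomial(\tilde{f})$ is a standard monomial, so by the uniqueness hypothesis $\leadmonomial(\tilde{f}) = x^\alpha $. Writing $\leadterm(\tilde{f}) = c \cdot x^\alpha $ with $c \in k^{\times }$, the polynomial $\tilde{f} - c\, x^\alpha $ is already a sum of standard monomials, so it coincides with its own normal form, and its leading monomial $x^{\alpha '}$ is strictly $<$-smaller than $x^\alpha $. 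Applying the same observation to $[f] - c [x^\alpha ]$ shows that $x^{\alpha '}$ is the smallest monomial in its $\mu $-fiber; comparing with $x^\alpha $ via the definition of $\prec $, this gives $\mu ([f] - c [x^\alpha ]) \succ \mathfrak{a}$.

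Consequently, $[f]$ and $c [x^\alpha ] = c [x_1]^{\alpha _1}\cdots [x_m]^{\alpha _m}$ have the same image in the leaf $F_{\succeq \mathfrak{a}}/F_{\succ \mathfrak{a}}$. Because $\mu $ is a valuation, the image of $[x^\alpha ]$ in $\gr_{\mu }(R/I)$ equals the product of the images of the $[x_i]$'s, so the leaf at $\mathfrak{a}$ sits inside the subalgebra generated by $\{[x_i] + F_{\succ \mu ([x_i])}\}_{i=1}^m$. Since $\mathfrak{a}$ was arbitrary and the leaves span $\gr_{\mu }(R/I)$, this completes the proof.

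The one subtlety I anticipate is the mildly confusing inverse relationship between the monomial order $<$ on $R$ and the order $\prec $ on the image of $\mu $ (smaller standard monomial corresponds to larger valuation), together with verifying that the leading monomial of the truncated polynomial $\tilde{f} - c\, x^\alpha $ is indeed the minimal monomial in its $\mu $-fiber. Both points follow from the uniqueness hypothesis once the definitions are carefully unwound, so I do not expect any genuine obstacle beyond bookkeeping.
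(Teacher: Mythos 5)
Your proof is correct and follows essentially the same route as the paper's: both reduce $[f] + F_{\succ\mu([f])}$ to the class of its leading standard monomial $\leadterm(\tilde f)$ and then factor that monomial as a product of the classes $[x_i] + F_{\succ\mu([x_i])}$. You are slightly more careful than the paper in two places — you keep track of the leading coefficient $c$ and you spell out why $\mu([f]-c[x^\alpha])\succ\mu([f])$ (using the uniqueness hypothesis to identify $\leadmonomial(\tilde f - cx^\alpha)$ as the minimal monomial in its fiber) — whereas the paper asserts $[f]+F_{\succ\mu([f])}=[\leadterm(\tilde f)]+F_{\succ\mu([f])}$ without elaboration; these are cosmetic differences, not a genuinely different argument.
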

\begin{proof}
Let $S\subseteq \{x_1, \ldots , x_m\}$ consisting of variables that are also standard monomials.  By~\Cref{rem:notKhovanskii} $\{[x_i]\}_{i\in S}$ is a subalgebra basis with respect to $<$.
Consider an element $\mu ([f])$ in the image of $\mu$, where $f\in R$. We have that $\mu ([f]) = \mu ([\tilde{f}]) = \mu ([\leadterm (\tilde{f})])$. 
We write $\leadterm (\tilde{f})$ as a product of variables in $S$ as follows:
$\leadterm (\tilde{f}) = \prod_{i\in S} x_i^{\alpha_i}$.
Hence $$[f] + F_{\succ \mu ([f])} = \left[\prod_{i\in S} x_i^{\alpha_i}\right] + F_{\succ \mu ([f])} = \prod_{i\in S} \left( [x_i] + F_{\succ \mu ([x_i])}\right)^{\alpha_i}.
$$
Therefore, $\{[x_i]\}_{i\in S}$ generate the associated graded.
\end{proof}

\section{Equivalence of valuations}\label{sec:eequivalence-valuations}
When the monomial order on $R/I$ is constructed as in Section \ref{sec:KhovanskiitoSubalgebra}, then $R/I\simeq A$ has two valuations on it: $\mu$ and $\nu$.  We show that these valuations are linearly equivalent.  As a first step, we simplify the construction of the lattice $K$ as above.

\begin{lemma}\label{lem:Kspan}
Consider a valued algebra $\nu:A\setminus\{0\}\rightarrow\mathbb{Q}^r$ satisfying the standing hypotheses, and suppose that $A$ has a finite Khovanskii basis $\{g_1,\dots,g_m\}$ with respect to $\nu$.  Let $R\vcentcolon=k[x_1,\dots,x_m]$ be the presentation ring for this basis.  Let $K$ be defined as above.  Then,
$$
K = \bZ\{\alpha - \beta \colon \alpha, \beta \in \mathbb{Z}^{m}, \, \,  \nu(\pi(x^{\alpha})) = \nu(\pi(x^{\beta}))\}.
$$
\end{lemma}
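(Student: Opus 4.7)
The plan is to prove both containments separately, with the forward direction being almost immediate and the reverse requiring a construction of specific elements of $I$ using the Khovanskii-basis structure.

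For the forward containment $K \subseteq \bZ\{\alpha-\beta : \nu(\pi(x^\alpha)) = \nu(\pi(x^\beta))\}$, I would invoke \Cref{rem:attainedTwice} directly: for any nonzero $f\in I$, its two largest monomials $x^{\alpha_1}>x^{\alpha_2}$ with respect to $<$ automatically satisfy $\nu(\pi(x^{\alpha_1})) = \nu(\pi(x^{\alpha_2}))$, so every generator $\torexp(f) = \alpha_1-\alpha_2$ of $K$ lies in the right-hand side.

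For the reverse containment, suppose $\alpha,\beta\in\bN^m$ satisfy $\nu(\pi(x^\alpha))=\nu(\pi(x^\beta))$; the case $\alpha=\beta$ is trivial, so assume $\alpha\neq\beta$. Using the one-dimensional-leaves hypothesis, I would choose $\lambda\in k^\times$ so that $e\vcentcolon=\pi(x^\alpha)-\lambda\pi(x^\beta)$ either vanishes or satisfies $\nu(e)\succ\nu(\pi(x^\alpha))$. In the vanishing case, $f\vcentcolon= x^\alpha-\lambda x^\beta$ is a binomial in $I$ with $\torexp(f)=\pm(\alpha-\beta)\in K$. Otherwise, apply Khovanskii subduction to $e$ to write $e=\sum_\gamma c_\gamma g^\gamma$, where every nonzero term satisfies $\nu(g^\gamma)=\nu(\pi(x^\gamma))\succeq\nu(e)\succ\nu(\pi(x^\alpha))=\nu(\pi(x^\beta))$; then set $f\vcentcolon= x^\alpha-\lambda x^\beta-\sum_\gamma c_\gamma x^\gamma\in I$.

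The key step is checking that this $f$ has $\torexp(f)=\pm(\alpha-\beta)$. By definition of the induced monomial order $<$, a strictly larger $\nu\circ\pi$-value corresponds to a strictly smaller monomial, so each $x^\gamma$ appearing in the subduction sum is strictly less than both $x^\alpha$ and $x^\beta$ under $<$. Hence the two largest monomials of $f$ are exactly $x^\alpha$ and $x^\beta$ (ordered by the tie-breaker $<'$), producing $\torexp(f)=\pm(\alpha-\beta)$ and therefore $\alpha-\beta\in K$. The main subtlety, which I expect to be the only nontrivial point to get right, is the bookkeeping that guarantees the subduction remainders $x^\gamma$ cannot coincide with $x^\alpha$ or $x^\beta$ nor displace them from the top two positions in $<$ — this follows cleanly from the strict valuation inequality above combined with the definition of the induced order.
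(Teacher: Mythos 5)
Your proof is correct and follows essentially the same route as the paper: the forward containment via Remark~\ref{rem:attainedTwice}, and the reverse via the one-dimensional-leaves hypothesis plus Khovanskii subduction to build a polynomial $h\in I$ whose two leading monomials are $x^\alpha$ and $x^\beta$. If anything you are slightly more careful than the published proof, since you spell out why the subduction remainders $x^\gamma$ lie strictly below $x^\alpha$ and $x^\beta$ in the induced order (and you have the subduction inequality $\nu(g^\gamma)\succeq\nu(e)$ in the correct direction), whereas the paper states $\torexp(h)=\alpha-\beta$ without justification.
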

\begin{proof}
By Remark \ref{rem:attainedTwice}, every toric exponent of an element in $I$ is of the desired form.  On the other hand, suppose that $\nu(\pi(x^\alpha))=\nu(\pi(x^\beta))$ with $\alpha\not=\beta$.  Since $\nu$ has one-dimensional leaves, there exists a $\lambda$ such that $\nu(\pi(x^\alpha)-\lambda \pi(x^\beta))\succ \nu(\pi(x^\alpha))$ or $\pi(x^\alpha)-\lambda \pi(x^\beta)=0$.  We write
$
\pi(x^\alpha)-\lambda \pi(x^\beta)=\sum_{\gamma\in\bN^m}c_\gamma g^\gamma
$ as a finite sum using subduction.  By the properties of subduction, for all $\gamma$ with $c_\gamma\not=0$, $\nu(\pi(x^\alpha)-\lambda \pi(x^\beta))\succeq\nu(g^\gamma).$  Let $h=x^\alpha-\lambda x^\beta-\sum_{\gamma\in\bN^m}c_\beta x^\gamma$.  By construction, $h\in I$ and $\torexp(h)=\alpha-\beta$, so $\alpha-\beta\in K$.
\end{proof}

\begin{remark}\label{rem:polyexists}
The proof of this lemma shows that if $\nu(\pi(x^\alpha))=\nu(\pi(x^\beta))$, with $\alpha\not=\beta$, then there is a polynomial $f\in I$ whose two leading monomials are $x^\alpha$ and $x^\beta$.
\end{remark}

We further simplify the construction of $K$ in terms of a Gr\"obner basis for $I$. This simplification is particularly useful for the construction of the Newton-Okounkov body in Section \ref{sec:NewtonOkounkov}.

\begin{corollary}
Consider a valued algebra $\nu:A\setminus\{0\}\rightarrow\mathbb{Q}^r$ satisfying the standing hypotheses, and suppose that $A$ has a finite Khovanskii basis $\{g_1,\dots,g_m\}$ with respect to $\nu$.  Let $R\vcentcolon= k[x_1,\dots,x_m]$ be the presentation ring for this basis and $I$ the kernel of the presentation map $\pi:R\rightarrow A$.  Let $K$ be defined as above, and $\{f_1,\dots,f_l\}$ a Gr\"obner basis for $I$.  $K$ is generated by $\{\torexp(f_j)\}_{j=1}^l$ as a $\bZ$-lattice.
\end{corollary}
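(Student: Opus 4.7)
The plan is to prove the two inclusions separately. The containment $\bZ\{\torexp(f_j)\}_{j=1}^l \subseteq K$ is immediate from the definition of $K$. For the reverse containment, set $L := \bZ\{\torexp(f_j)\}_{j=1}^l$. Combining \Cref{lem:Kspan} with \Cref{rem:polyexists}, $K$ is generated as a $\bZ$-lattice by the toric exponents $\torexp(f)$ as $f$ ranges over the nonzero non-monomial elements of $I$. Hence it suffices to show that $\torexp(f) \in L$ for every such $f$.

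I would prove this by well-founded induction on $\leadmonomial(f)$ using the well-order $<$. Let $f \in I$ be nonzero and non-monomial, with leading monomial $x^{\alpha_1}$ and second-largest monomial $x^{\alpha_2}$, so $\torexp(f) = \alpha_1 - \alpha_2$. The Gr\"obner basis property furnishes some $f_j$ and monomial $x^\gamma$ with $\alpha_1 = \gamma + \beta_1^{(j)}$, where $x^{\beta_1^{(j)}} := \leadmonomial(f_j)$ and $x^{\beta_2^{(j)}}$ is the second monomial of $f_j$. A direct calculation gives
$$\torexp(f) - \torexp(f_j) = (\gamma + \beta_2^{(j)}) - \alpha_2.$$
If this vanishes, we are done immediately. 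Otherwise, \Cref{rem:attainedTwice} applied to both $f$ and $x^\gamma f_j$ yields $\nu(\pi(x^{\alpha_2})) = \nu(\pi(x^{\gamma + \beta_2^{(j)}})) = \nu(\pi(x^{\alpha_1}))$, and \Cref{rem:polyexists} then produces a polynomial $h \in I$ whose top two monomials are precisely $x^{\alpha_2}$ and $x^{\gamma + \beta_2^{(j)}}$.

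Since both $x^{\alpha_2}$ and $x^{\gamma + \beta_2^{(j)}}$ are strictly smaller than $x^{\alpha_1}$, we have $\leadmonomial(h) < \leadmonomial(f)$, so by the inductive hypothesis $\torexp(h) \in L$. As $\torexp(h) = \pm[(\gamma + \beta_2^{(j)}) - \alpha_2]$, we conclude $\torexp(f) = \torexp(f_j) \pm \torexp(h) \in L$, closing the induction. The implicit base case, where $\leadmonomial(f)$ is minimal in $\leadmonomial(I)$, is handled automatically: in that situation $\gamma = 0$ and the standard reduction argument forces $f$ to be a scalar multiple of $f_j$, so $\torexp(f) = \torexp(f_j)$.

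The main obstacle, which the appeal to \Cref{rem:polyexists} is designed to sidestep, is that a naive analysis of $f' := f - c \cdot x^\gamma f_j$ (after rescaling leading coefficients) does not cleanly relate $\torexp(f')$ to $\torexp(f)$: the second monomial of $f'$ need not coincide with either $x^{\alpha_2}$ or $x^{\gamma + \beta_2^{(j)}}$, because lower-order terms of $f$ or $x^\gamma f_j$ may reshuffle the top of the expansion after the leading-term cancellation. Producing a separate polynomial $h$ with a controlled pair of leading monomials bypasses this difficulty and lets the induction close.
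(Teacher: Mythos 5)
Your proof is correct and takes essentially the same approach as the paper's, relying on the same key ingredients: Gr\"obner-basis divisibility of the leading monomial, Remark \ref{rem:attainedTwice}, Remark \ref{rem:polyexists}, and a descending induction. The only difference is cosmetic --- you run a single well-founded induction on $\leadmonomial(f)$ over all of $I$, while the paper fixes a valuation level $a$ and performs a finite induction on the ordered list of monomial exponents whose images under $\nu\circ\pi$ equal $a$.
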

\begin{proof}
Let $a$ be in the image of $\nu$, and suppose that $\alpha_1,\dots,\alpha_n\in\bN^m$ are the exponents of all monomials $x^\alpha$ such that $\nu(\pi(x^\alpha))=a$ and ordered by $x^{\alpha_i}<x^{\alpha_{i+1}}$.  By induction on $l$, we show that all differences of the form $\alpha_{i_1}-\alpha_{i_2}$ with $i_1,i_2\leq l$ are in the $\bZ$-lattice generated by the toric exponents of the Gr\"obner basis.  The base case of $l=1$ is vacuously true.

We assume the claim is true for $l\geq 1$ and consider the case of $l+1$.  By Remark \ref{rem:polyexists}, there is some polynomial $h$ whose leading monomial is $x^{\alpha_{l+1}}$.  By the property of being a Gr\"obner basis, there is some $f_j$ such that $\leadmonomial(f_j)$ divides $x^{\alpha_{l+1}}$.  Therefore, there is some $x^\delta$ so that $x^{\alpha_{l+1}}=x^\delta\leadmonomial(f_j)$.  By Remark \ref{rem:attainedTwice}, the second largest monomial of $x^\delta f_j$ is $x^{\alpha_i}$ for some $i<l+1$.  Therefore, $\alpha_{l+1}-\alpha_i=\torexp(x^\delta f_j)=\torexp(f_j)$.  The inductive hypothesis then implies that the claim is true for the case of $l+1$.
\end{proof}

\begin{remark}\label{rem:quotientrepresentative}
Suppose that $f\in R$ so that $\nu(\pi(f))=\nu(\pi(\leadterm(f)))$.  Then, $\nu(\pi(\leadterm(f)))=\nu(\pi(\leadterm(\tilde{f})))$.  By Remark \ref{rem:polyexists}, there is a polynomial $f\in I$ whose leading monomials are $\leadmonomial(f)$ and $\leadmonomial(\tilde{f})$.  This implies that the image of $-\exp(\leadmonomial(f))$ in $\torfree(\bZ^m/K)$ equals $\mu([f])$.  In this case, it is not necessary to replace $f$ by $\tilde{f}$ in our computations.
\end{remark}

We now show that the two valuations defined on $R/I\simeq A$ are linearly equivalent.  This indicates that we may use subalgebra bases for quotient rings as a computational replacement for Khovanskii bases without losing information.

\begin{theorem}\label{thm:affinetransformation}
Consider a valued algebra $\nu:A\setminus\{0\}\rightarrow\mathbb{Q}^r$ satisfying the standing hypotheses, and suppose that $A$ has a finite Khovanskii basis $\{g_1,\dots,g_m\}$ with respect to $\nu$.  Let $R\vcentcolon= k[x_1,\dots,x_m]$ be the presentation ring for this basis and $I$ the kernel of the presentation map $\pi:R\rightarrow A$.  Let $K$ and $\mu$ be defined as above.  Then, $\mu$ and $\nu$ are linearly equivalent, that is, there is an invertible linear transformation $\phi$ from the span of the image of $\mu$ to the span of $\nu$ such that $\nu\circ\pi(f)=\phi\circ\mu([f])$ for all $f\in R$.
\end{theorem}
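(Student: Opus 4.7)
The natural candidate for $\phi$ is the unique linear map sending $\mu([x_i]) \mapsto \nu(g_i)$ for each $i$. To construct it rigorously, the plan is to first define the auxiliary $\mathbb{Z}$-linear map $\Psi : \bZ^m \to V_\nu$ by $\Psi(\alpha) \vcentcolon= \nu(\pi(x^\alpha)) = \sum_i \alpha_i \nu(g_i)$, where $V_\nu \subseteq \bQ^r$ denotes the $\bQ$-span of the image of $\nu$. By~\Cref{lem:Kspan}, $\Psi$ vanishes on every generator $\alpha - \beta$ of $K$, since by construction $\nu(\pi(x^\alpha)) = \nu(\pi(x^\beta))$. Because $V_\nu$ is torsion-free, $\Psi$ further descends to a well-defined map on $\torfree(\bZ^m/K)$. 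Tensoring with $\bQ$ and negating, I would define $\phi$ on the span $V_\mu$ of the image of $\mu$ by $\phi(\bar{\alpha}) \vcentcolon= -\nu(\pi(x^\alpha))$.

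The compatibility $\phi \circ \mu = \nu \circ \pi$ is then a direct verification: writing $x^\alpha = \leadmonomial(\tilde{f})$ for any nonzero $[f] \in R/I$, the class $\mu([f]) = -\bar{\alpha}$ satisfies $\phi(\mu([f])) = \nu(\pi(x^\alpha))$, which equals $\nu(\pi(f))$ by~\Cref{lem:leadtermequality}. Surjectivity of $\phi$ is then immediate from $\phi(\mu([x_i])) = \nu(g_i)$, since by subduction every element of $V_\nu$ is a $\bZ$-linear combination of the $\nu(g_i)$'s.

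The injectivity is the step I expect to require the most care, and it ultimately hinges on the converse direction encoded in~\Cref{lem:Kspan}. Given $v \in V_\mu$ with $\phi(v) = 0$, my plan is to lift $v$ to some $\tilde{v} \in \bQ^m$, clear denominators to obtain an integer vector $w = N\tilde{v} \in \bZ^m$ satisfying $\Psi(w) = 0$, and then decompose $w = \alpha - \beta$ into positive and negative parts with $\alpha, \beta \in \bN^m$. The equation $\Psi(w) = 0$ then rearranges to $\nu(\pi(x^\alpha)) = \nu(\pi(x^\beta))$, so~\Cref{lem:Kspan} yields $w \in K$. Consequently $\tilde{v} \in K \otimes \bQ$, so $v = 0$ in $\torfree(\bZ^m/K) \otimes \bQ$, establishing injectivity and completing the proof that $\phi$ is the required linear isomorphism.
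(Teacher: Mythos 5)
Your proof is correct and organizes the argument somewhat differently from the paper, in a way I find cleaner.  The paper's proof picks $r$ variables whose classes $\mu([x_i])$ form a basis for the span of the image of $\mu$, declares $\phi(\mu([x_i])) \vcentcolon= \nu(g_i)$ on that basis, and then runs the positive/negative decomposition argument twice --- once to show $\{\nu(g_i)\}_{i=1}^r$ is independent, and once to show that any relation $\sum c_i\mu([x_i])=0$ forces the corresponding relation $\sum c_i \nu(g_i)=0$.  You instead build $\phi$ coordinate-free: define $\Psi:\bZ^m\to V_\nu$ by $\Psi(\alpha)=\sum_i\alpha_i\nu(g_i)$, note via~\Cref{lem:Kspan} that $\Psi$ kills $K$, use torsion-freeness of $V_\nu$ to descend through $\torfree(\bZ^m/K)$, and then tensor with $\bQ$.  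The positive/negative decomposition and the converse direction of~\Cref{lem:Kspan} (equivalently~\Cref{rem:polyexists}) appear only once, in your injectivity step, rather than twice.  Your approach also makes explicit --- via~\Cref{lem:leadtermequality} --- that $\nu\circ\pi(f)=\phi\circ\mu([f])$ for general $f\in R\setminus I$, a point the paper's proof treats only on the generators $x_i$ and leaves the reader to extend.  One detail worth spelling out: the map you construct a priori lives on all of $\torfree(\bZ^m/K)\otimes\bQ$, so to match the statement you should check that this equals the span $V_\mu$ of the image of $\mu$.  This does hold, since for each $i$ the standard monomial $x^\alpha=\leadmonomial(\widetilde{x_i})$ satisfies $\nu(\pi(x^\alpha))=\nu(g_i)$ by~\Cref{lem:leadtermequality}, hence $\bar{\alpha}=\bar{e_i}$ by~\Cref{lem:Kspan}, hence $\mu([x_i])=-\bar{e_i}$ is already in the image of $\mu$.
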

\begin{proof}
Suppose that the image of $\mu$ is of rank $r$ and that the variables $\{x_1,\dots,x_r\}$ are standard monomials such that $\bZ\{\mu([x_i])\}_{i=1}^r$ is of rank $r$.  We define the map $\phi$ as $\phi(\mu([x_i]))=\nu(\pi(x_i))$, and extend it by linearity.

We first show that $\{\nu(\pi(x_i))\}_{i=1}^r$ is independent by contradiction.  Suppose that there is a nontrivial sum $\sum_{i=1}^r c_i\nu(\pi(x_i))=\sum_{i=1}^r c_i\nu(g_i)=0$.  By scaling, we may assume that each $c_i$ is an integer.  We separate the positive and negative parts of the coefficients, where $\alpha_i=\max\{0,c_i\}$ is the vector of positive coefficients and $\beta_i=\alpha_i-c_i$ is the vector of negative coefficients.  It follows that $\nu(g^\alpha)=\nu(g^\beta)$.  Since $\pi(x^\alpha)=g^\alpha$ and $\pi(x^\beta)=g^\beta$, Remark \ref{rem:polyexists} implies that there is an $f$ in $I$ whose leading monomials are $x^\alpha$ and $x^\beta$.  Therefore, $\alpha-\beta=c\in K$.  By Remark \ref{rem:quotientrepresentative}, $\mu([x^\alpha])=\sum_i \alpha_i\mu([x_i])$, which is the image of $\alpha$ in $\torfree(\bZ^m/K)$, and $\mu([x^\beta])=\sum_i \beta_i\mu([x_i])$, which is the image of $\beta$ in $\torfree(\bZ^m/K)$.  Therefore, $\sum_i c_i\mu([x_i])$ is the image of $\alpha-\beta$ in $\torfree(\bZ^m/K)$.  This image is trivial, which contradicts the assumption that $\{\mu([x_i])\}_{i=1}^r$ is independent.

Suppose that there is a nontrivial $\sum_{i=1}^m c_i\mu([x_i])=0$.  By scaling, we may assume that each $c_i$ is an integer.  We separate the positive and negative parts of the coefficients where $\alpha_i=\max\{0,c_i\}$ is the vector of positive coefficients and $\beta_i=\alpha_i-c_i$ is the vector of negative coefficients.  By Remark \ref{rem:quotientrepresentative}, we note that  $\sum_i \alpha_i\mu([x_i])$ is the image of $\alpha$ in $\torfree(\bZ^m/K)$ and $\sum_i\beta_i\mu([x_i])$ is the image of $\beta$ in $\torfree(\bZ^m/K)$.  Since $\sum_i \alpha_i\mu([x_i])=\sum_i\beta_i\mu([x_i])$, we have that the image of $\alpha-\beta$ is zero in $\torfree(\bZ^m/K)$.  In other words, there is some nonzero integer $s\in\bZ$ so that $s(\alpha-\beta)\in K$.  From Lemma \ref{lem:Kspan}, it follows that $\nu(\pi(x^{s\alpha}))=\nu(\pi(x^{s\beta}))$.  Since $\nu(\pi(x^{s\alpha}))=s\nu(\pi(x^{\alpha}))$ and $\nu(\pi(x^{s\beta}))=s\nu(\pi(x^{\beta}))$, we conclude that $\nu(\pi(x^{\alpha}))=\nu(\pi(x^\beta)).$  Thus, by Remark \ref{rem:polyexists}, $\alpha-\beta\in K$ and $\sum_i\alpha_i\nu(g_i)=\sum_i\beta_i\nu(g_i)$ or that $\sum_i c_i\nu(g_i)=0.$

Therefore, since any $\mu([x_i])$ can be written in terms of the basis $\{\mu([x_1]),\dots,\mu([x_r])\}$, $\nu(g_i)$ is the corresponding linear combination of $\{\nu(g_1),\dots,\nu(g_r)\}$.  Hence, $\{\nu(g_1),\dots,\nu(g_r)\}$ also form a full rank sublattice of the lattice generated by $\{\nu(g_1),\dots,\nu(g_m)\}$.  These relationships additionally imply that the transformation $\phi$ takes $\mu([x_i])$ to $\nu(g_i)$ for all $1\leq i\leq m$.
\end{proof}

\section{Newton-Okounkov bodies}\label{sec:NewtonOkounkov}
One of the most important invariants of a graded, valued algebra $A$ is its Newton-Okounkov body \cite{KavehManon:2019,KavehKhovanskii:2012,Anderson:2013,BurrKhovanskiiHomotopies:2023}. 
This is a convex body which captures homological and geometric data of $A$. For instance, the normalized volume of the Newton-Okounkov body is the asymptotic growth rate of the Hilbert function for the algebra, see, for instance \cite[Theorem 2.23]{KavehManon:2019} and \cite[Theorem 4.9]{KavehKhovanskii:2012}.  We show how to compute the Newton-Okounkov body of a graded algebra using the constructions of the previous sections.

We follow the construction of the Newton-Okounkov body as in \cite{KavehManon:2019}. 
Consider a valued algebra $\nu ' : A ' \setminus \{ 0 \} \to \bZ^r$ satisfying the standing hypotheses, and a positively graded algebra $A=\bigoplus_{i\geq 0}A_i$ where $A_i \subseteq A'$ for all $i.$  
We extend $\nu '$ to valuation, which also satisfies the standing hypotheses, $\nu:A \setminus \{ 0 \} \to \bN \times \bZ^r$.
We decompose $f\in A \setminus \{ 0 \}$ into homogeneous components, $f=\sum_{i=0}^m f_i$ with $f_i\in A_i$ and $f_m\not=0$, and define $\nu(f)\vcentcolon=(m,\nu'(f_m))$. We order $\bN \times\bZ^r$ so that $(m,a)\succ (n,b)$ if $m<n$ or $m=n$ and $a\succ b$.
\begin{definition}
The {\em Newton-Okounkov body} associated to $A$ and $\nu '$ is the closed, convex set
\begin{align*}
\Delta (A, \nu)     &\coloneqq \overline{\conv\{\nu'(f)/i:f\in A_i\setminus\{0\}\}} .
\end{align*}
\end{definition}
When $A$ has a finite Khovanskii basis, we may assume, without loss of generality, that every basis element is homogeneous.  In other words, $\{g_1,\dots,g_m\}$ form a Khovanskii basis and $\deg(g_i) = d_i$, that is $g_i\in A_{d_i}$.  Then, the Newton-Okounkov body is $\conv\{\nu'(g_i)/d_i\}.$

Suppose that $A = \bigoplus_{i \geq 0} A_{i}$ is a positively graded and valued algebra with valuation $\nu'$ as above with a finite Khovanskii basis $\{g_1,\dots,g_m\}$.  Let $R$ be the presentation ring such that $\pi:R\rightarrow A$ has kernel $I$, and a monomial order $<$ induced by $\nu$.

\begin{theorem}
  \label{thm:NOBodyConstruction}
  Let \(\mu\) be the valuation on \(R/I\) defined above and let \( \vec{d} = \left(\deg(g_{i}) \colon 1 \leq i \leq m \right) \) be the vector of degrees of Khovanskii basis elements.  The Newton-Okounkov body of \(R/I\) is given by
  \begin{equation}\label{eq:NewtonOkounkovLattice}
   \Delta(R/I, \mu) = \conv\left\{\|\vec{d}\|^2\mu([x_{i}])/\mu([x_i])_1\right\}.
  \end{equation}
\end{theorem}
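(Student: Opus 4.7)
The plan is to recognize $\{[x_1],\dots,[x_m]\}$ as a finite homogeneous Khovanskii basis of $R/I$ with respect to $\mu$, and then to invoke the observation immediately preceding the theorem: for such a basis the Newton--Okounkov body is the convex hull of the values of the generators divided by their degrees.

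The first step is to fix the grading. Giving $R$ the weighted grading $\deg(x_i) = d_i$ makes the presentation map $\pi: R \to A$ into a graded ring homomorphism, so $I$ is a homogeneous ideal, $R/I \cong A$ as graded algebras, and each $[x_i]$ is homogeneous of degree $d_i$. Under the extension of $\mu$ by a degree coordinate, as was done for $\nu$ above, the first coordinate then satisfies $\mu([x_i])_1 = d_i$.

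The second step is to check that $\{[x_1],\dots,[x_m]\}$ is a Khovanskii basis of $R/I$ with respect to $\mu$. This is the content of~\Cref{cor:khovanskii-from-sagbi}, whose hypotheses transfer from the standing hypotheses on $\nu'$: primality and monomial-freeness of $I$ follow from $A$ being a domain with nonzero Khovanskii generators, and the unique-standard-monomial condition on $\mu$ is transported through the linear equivalence of valuations established in~\Cref{thm:affinetransformation}.

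Assembling these ingredients, the general formula for the Newton--Okounkov body in the presence of a finite homogeneous Khovanskii basis yields
\[
\Delta(R/I,\mu) \;=\; \conv\!\bigl\{\mu([x_i])/d_i\bigr\} \;=\; \conv\!\bigl\{\mu([x_i])/\mu([x_i])_1\bigr\},
\]
which agrees with the displayed equation up to the scalar $\|\vec d\|^{2}$. I expect the main obstacle to be accounting for this factor: $\mu$ takes values in $\torfree(\bZ^m/K)$, and any concrete representation of its image in $\bQ^r$ depends on a choice of embedding. A careful bookkeeping of the embedding of $\torfree(\bZ^m/K)$ into $\bQ^r$ -- most naturally obtained by splitting off the degree functional $\vec d : \bZ^m \to \bZ$ via its pseudoinverse, which brings $\|\vec d\|^{2}$ in as a normalization constant -- together with consistency with the linear equivalence $\phi$ from~\Cref{thm:affinetransformation}, should then recover the formula exactly as stated.
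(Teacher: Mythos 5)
Your overall strategy is the same as the paper's: apply the general fact that, for a positively graded algebra with a finite homogeneous Khovanskii basis, the Newton--Okounkov body is the convex hull of the valuations of the basis elements divided by their degrees, and specialize this to the basis $\{[x_1],\dots,[x_m]\}$ of $(R/I,\mu)$. Your explicit appeal to \Cref{cor:khovanskii-from-sagbi} to certify that this is indeed a Khovanskii basis is a sensible addition that the paper leaves implicit. However, the proposal has a genuine gap exactly where the factor $\|\vec{d}\|^2$ must enter, and you flag this yourself without resolving it.

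The trouble begins with the claim that ``under the extension of $\mu$ by a degree coordinate $\ldots$ $\mu([x_i])_1 = d_i$.'' There is no such extension step: $\mu$ is already the image of $\tilde{\mu}([f]) = -\exponent(\leadmonomial(\tilde f))$ in $\torfree(\bZ^m/K)$, and the degree information survives this quotient automatically because every toric exponent of a homogeneous element of $I$ is orthogonal to $\vec{d}$, so that $K \subseteq \vec{d}^{\perp}$. What does require a choice is a concrete coordinate representation of $\torfree(\bZ^m/K)$, and the paper makes one by extending a lattice basis $\{\vec{w}_1,\dots,\vec{w}_\ell\}$ of $K$ first by $\vec{w}_{\ell+1}=\vec{d}$ and then by further vectors orthogonal to $\vec{d}$. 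In these coordinates, since $\vec{w}_{\ell+1}$ is orthogonal to every other $\vec{w}_j$, the $\vec{w}_{\ell+1}$-coefficient of $\alpha$ is $(\alpha\cdot\vec{d})/\|\vec{d}\|^2 = \deg(\pi(x^\alpha))/\|\vec{d}\|^2$, not $\deg(\pi(x^\alpha))$. This single computation is what produces the normalization $\|\vec{d}\|^2$ in \Cref{eq:NewtonOkounkovLattice}. Your remark about splitting off $\vec{d}$ via its pseudoinverse $\vec{d}^T/\|\vec{d}\|^2$ is exactly the right instinct, but the proposal stops at asserting that ``careful bookkeeping $\ldots$ should then recover the formula'' rather than carrying that bookkeeping out. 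Since this computation is essentially the entire content of the paper's short proof, what is left unproved in the proposal is precisely the step the theorem's proof supplies.
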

\begin{proof}
  Fix a lattice basis \(\left(\vec{w}_{1}, \ldots, \vec{w}_{\ell}\right)\) for $K$, and set $\vec{w}_{\ell +1} = \vec{d}.$
  By construction, $\vec{w}_i$ and $\vec{w}_{\ell +1}$ are orthogonal for all $i$. Choose a set of vectors $\{\vec{w}_{\ell+2},\dots,\vec{w}_m\} \subseteq\bZ^m$ which extends $\{ \vec{w}_1,\dots,\vec{w}_{\ell +1}\}$ to a basis of the vector space $\bQ^m$ such that $\vec{w}_i$ and $\vec{w}_{\ell +1}$ are orthogonal for all $i \ne \ell +1$.  
  
  The valuation $\mu$ can be represented by $\mu:R/I\setminus\{0\}\rightarrow\bQ^{m-\ell}$ where $\mu([x^\alpha])$ are the coordinates of $\alpha$ with respect to the vectors $\{\vec{w}_{\ell+1},\dots,\vec{w}_m\}$ in the basis defined above.  This construction embeds $\torfree(\bZ^m/K)$ as a subset of $\bQ^{m-\ell}$.
  Let $L\simeq\bZ^{m-\ell}$ be the lattice generated by the images $\{\mu([x_1]),\dots,\mu([x_m])\}$, and the Newton-Okounkov body for $R/I$ is defined with respect to this lattice.  We observe that, since $\vec{w}_{\ell +1}$ is orthogonal to the other $\vec{w}_i$s, the first entry of $\mu([x^\alpha])$ is $\deg(\pi(x^\alpha))/\|\vec{d}\|^2$.
\end{proof}

Suppose $\{[x_1],\dots,[x_m]\}$ form a minimal subalgebra basis, then $\Delta(R/I, \mu)$ is $(m-\ell)$-dimensional.  When $\Delta(R/I, \mu)$ is full-dimensional, the normalized volume of the Newton-Okounkov body can be computed in terms of the standard volume on $\bQ^{m-\ell}$ (as opposed to computing the volume in terms of the integral lattice as in~\Cref{eq:NewtonOkounkovLattice}).  Since $\vec{w}_{\ell +1}$ is perpendicular to all other vectors, the multiplicative factor for the volume splits along this dimension.  In particular, the multiplicative factor for the volume is the length of a lattice generator of the first-coordinates of $\left(\mu([x_1]),\dots,\mu([x_m])\right)$ divided by the volume of a fundamental domain of $L$ in $\bQ^m$.  Since $\mu([x_i])_1=\frac{d_i}{\|\vec{d}\|}$, the length of the lattice generator of the first-coordinates is $\frac{\gcd(d_1,\dots,d_m)}{\|\vec{d}\|^2}$.

\begin{algorithm}[hbt]
  \caption{Calculating \(\vol(\Delta(R/I, \mu))\).}
  \label{alg:NOBodyVolume}
  \begin{algorithmic}[1]
    \Require a positively graded $k$-algebra and domain \(A\), valuation \(\nu\) satisfying the standing hypotheses, and finite Khovanskii basis \(\left\{g_{1}, \ldots, g_{m}\right\}\) for \((A, \nu)\).
    \Ensure the normalized volume of \(\Delta(R/I, \mu)\).
    \State Construct presentation ring \(R\) with presentation ring \(\pi: R \rightarrow A\), monomial order \(>\) induced by \(\nu\), and \(I \vcentcolon= \ker(\pi)\) (see Section~\ref{sec:KhovanskiitoSubalgebra}).
    \State Compute a Gr\"obner basis $G$ for $I$.
    \State Compute a basis $\{\vec{w}_1,\dots,\vec{w}_\ell\}$ of the lattice $K$ spanned by the toric exponents of $G$.
    \State Set \(\vec{w}_{\ell+1} = \vec{d} = \left(\deg(g_{i})\right) \in \bZ^{m}\) to be the vector of degrees of all Khovanskii basis elements.
    \State Extend $\bZ \{ \vec{w}_1, \ldots , \vec{w}_{\ell + 1} \}$ to a full-rank lattice $\bZ \{ \vec{w}_1, \ldots , \vec{w}_m \}$ such that $\vec{w}_{\ell+1} \in \{\vec{w}_{\ell+1}, \ldots , \vec{w}_m \}^{\perp }.$
    \State Set $W = (\begin{array}{c|c|c} \vec{w}_1 & \cdots & \vec{w}_m \end{array}) \in \bZ^{m\times m}.$
    \State Construct $V \in \bQ^{(m- \ell - 1) \times m}$ by selecting the last $m-\ell-1$ rows of $W^{-1}$ and scaling the $i$th column by $d_i^{-1}$ for $i=1,\ldots , m.$
    \State Construct the matrix $L'$ of minimal generators of the lattice generated by the last \(m - \ell\) rows of \(W^{-1}\).
    \State\Return $\frac{(m-\ell-1)!\gcd(d_1,\dots,d_m)\vol(\conv(V))}{\|\vec{d}\|^2\det(L')}.$
  \end{algorithmic}
\end{algorithm}

\begin{example}\label{exm:NObodies}
Consider the following example from \cite[Example 23]{BurrKhovanskiiHomotopies:2023}.  Let \(U\) denote the complex vector space of cubic polynomials in \(\mathbb{C}[x,y]\) that vanish on the points \(\left\{(4, 4), (-3, -1), (-1, -1), (3,3)\right\}\).  
We associate to $U$ the algebra \(R(U) = \bigoplus_{k \geq 0} U^{k}s^{k}\), graded by $s$-degree.
In \cite{BurrKhovanskiiHomotopies:2023}, the authors showed that \(R(U)\) has the finite Khovanskii basis \(\mathcal{B} = \left\{g_{0}s, g_{1}s, g_{2}s, g_{3}s, g_{4}s, g_{5}s, g_{6}s^{2}, g_{7}s^{3}\right\}\) under the valuation \(\nu : R(U) \setminus \{0\} \to \bQ^3 \) induced by the graded reverse lexicographic order with $x>y$, and
  \[\nu(\mathcal{B}) =
    \begin{pmatrix}
      1 & 1 & 1 & 1 & 1 & 1 & 2 & 3\\
      1 & 2 & 0 & 1 & 2 & 3 & 1 & 4\\
      1 & 0 & 3 & 2 & 1 & 0 & 3 & 1
    \end{pmatrix}.\]
  The corresponding Newton-Okounkov body \(\Delta(R(U), \nu)\) is shown in Figure \ref{exm23NOBody} and has normalized volume $5$.
  \begin{figure}[htb]
    \centering
    \begin{tikzpicture}
      \filldraw[color=blue,fill=yellow,line width=1pt] (0,3) -- (3,0) -- (2,0) -- (4/3,1/3) -- (1/2,3/2) -- cycle;
      \foreach \x in {0,...,3}
      {
        \foreach \y in {0,...,3}
        {
          \filldraw[color=red] ({\x},{\y}) circle (.06);
        }
      }
      \node[left] at (0,3) {$(0,3)$};
      \node[right] at (3,0) {$(3,0)$};
      \filldraw[color=red!60!black] (4/3,1/3) circle (.06);
      \filldraw[color=red!60!black] (1/2,3/2) circle (.06);
      \node at (1/2-.1,3/2) (A') {};
      \node at (4/3-.1,1/3) (B') {};
      \node at (-2,3/2) (A) {$\left(\frac{1}{2},\frac{3}{2}\right)$};
      \node at (-1,1/3) (B) {$\left(\frac{4}{3},\frac{1}{3}\right)$};
      \draw (A) edge[->] (A')
      (B) edge[->] (B');
    \end{tikzpicture}
    \caption{The Newton-Okounkov body \(\Delta(R(U), \nu)\)}
    \label{exm23NOBody}
  \end{figure}
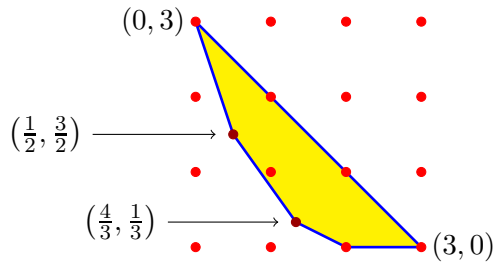
  Following Algorithm \ref{alg:NOBodyVolume} we calculate \(\vol(\Delta(R/I, \mu))\).  First let \(R = \mathbb{C}[z_{0}, z_{1}, z_{2}, z_{3}, z_{4}, z_{5}, z_{6}, z_{7}]\) be the presentation ring for \(R(V)\) and the monomial order induced on \(R\) by \(\nu\) is given by the rows of the matrix
  \[
    \begin{pmatrix}
      1 & 1 & 1 & 1 & 1 & 1 & 2 & 3\\
      2 & 2 & 3 & 3 & 3 & 3 & 4 & 5\\
      1 & 2 & 0 & 1 & 2 & 3 & 1 & 4
    \end{pmatrix}
  \]
  as weight vectors where further ties are broken with graded reverse lexicographic order.
  We have a direct sum decomposition \(\bQ^{8} = K \oplus \bQ \oplus \bQ^{2}\), with bases comprised of the columns of \(W\),
  \[
    W =
    \left(\begin{array}{ccccc|c|cc}
      1 & 2 & 3 & -3 & -4 & 1 & 0 & 0\\
      -1 & -2 & -3 & 1 & 0 & 1 & 0 & 0\\
      -1 & -1 & -1 & 0 & 1 & 1 & 0 & 0\\
      1 & 0 & 0 & 0 & 0 & 1 & 0 & 0\\
      0 & 1 & 0 & 0 & 0 & 1 & 0 & 0\\
      0 & 0 & 1 & 0 & 0 & 1 & 2 & 3\\
      0 & 0 & 0 & 1 & 0 & 2 & -1 & 0\\
      0 & 0 & 0 & 0 & 1 & 3 & 0 & -1
    \end{array}\right).
  \]
  The Newton-Okounkov body $\Delta( R/I, \mu) = \conv (V)$, where
  \[
    V =
    \begin{pmatrix}
      -\frac{11}{190} & \phantom{-}\frac{13}{38} & -\frac{91}{95} & -\frac{53}{95} & -\frac{3}{19} & \phantom{-}\frac{23}{95} & -\frac{49}{190} & \phantom{-}\frac{23}{95}\\[.1cm]
      \phantom{-}\frac{3}{190} & -\frac{7}{38} & \phantom{-}\frac{68}{95} & \phantom{-}\frac{49}{95} & \phantom{-}\frac{6}{19} & \phantom{-}\frac{11}{95} & \phantom{-}\frac{11}{95} & -\frac{62}{285}
    \end{pmatrix},
  \]
    is a polytope of Euclidean volume $\vol (V) = \frac{1}{4}$, shown in~\Cref{exm23NOBody2}.
  \begin{figure}[htb]
    \centering
    \begin{tikzpicture}[scale=2]
      \filldraw[color=blue,fill=yellow,line width=1pt] (-91/95,68/95) -- (-49/190,11/95) -- (23/95,-62/285) -- (13/38,-7/38) -- (23/95,11/95) -- cycle;
      \foreach \x in {-1,...,1}
      {
        \foreach \y in {-1,...,1}
        {
          \filldraw[color=red] ({\x},{\y}) circle (.03);
        }
      }
      \node[right] at (1,-1) {$(1,-1)$};
      \node[left] at (-1,1) {$(-1,1)$};
      \node[left] at (-91/95,68/95) {$(\frac{-91}{\phantom{-}95},\frac{68}{95})$};
      \node[right] at (13/38,-7/38) {$(\frac{13}{38}, \frac{-7}{\phantom{-}38})$};
      \filldraw[color=red!60!black] (-91/95,68/95) circle (.03);
      \filldraw[color=red!60!black] (13/38,-7/38) circle (.03);
      \filldraw[color=red!60!black] (-11/190, 3/190) circle (.03);
      \filldraw[color=red!60!black] (-53/95, 49/95) circle (.03);
      \filldraw[color=red!60!black] (-3/19, 6/19) circle (.03);
      \filldraw[color=red!60!black] (23/95, 11/95) circle (.03);
      \filldraw[color=red!60!black] (-49/190, 11/95) circle (.03);
      \filldraw[color=red!60!black] (23/95,-62/285) circle (.03);
    \end{tikzpicture}
    \caption{The Newton-Okounkov body \(\Delta(R/I, \mu)\)}
    \label{exm23NOBody2}
  \end{figure}
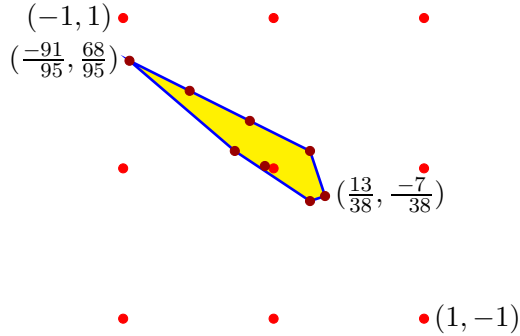
  
The lattice \( L \cong \bZ^3 \)  formed from the last three rows of $W^{-1}$ is  generated by the columns of the matrix
  \[
    L^{\prime} =
    \begin{pmatrix}
      1 & 0 & -\frac{6}{19}\\[.1cm]
      0 & 1 & -\frac{67}{190}\\[.1cm]
      0 & 0 & \phantom{-}\frac{1}{190}
    \end{pmatrix}.
  \]
Since $\vec{d}$ is the sixth column of $W$ above,~\Cref{alg:NOBodyVolume} gives us the normalized volume of $\Delta (R(U), \mu )$ as
  $$
  \frac{(m-\ell-1)!\gcd(d_1,\dots,d_8)\vol(\conv(V))}{\|\vec{d}\|^2\det(L')} = 
\frac{2!(1)\left(\frac{1}{4}\right)}{19\left(\frac{1}{190}\right)}=5,
  $$
agreeing with the normalized volume calculated in \cite[Example~23]{BurrKhovanskiiHomotopies:2023}.
\end{example}


\begin{theorem}\label{thm:affine-equivalence}
Let $\nu : A \setminus \{ 0 \} \to \bZ^{r+1}$ be the valuation on a graded algebra $A = \oplus_{i\ge 0} A_i$ induced by a valued algebra $\nu ' : A' \setminus \{ 0 \} \to \bZ^r$ satisfying the standard hypotheses with $A_i \subseteq A'$.
Assume $A$ has a finite Khovanskii basis $\{ g_1, \ldots , g_m \},$ and define $\mu:R/I\rightarrow\torfree(\bZ^m/K)$ as in~\Cref{sec:subalgebra-to-khovanskii}.
The Newton-Okounkov bodies $\Delta(A,\nu)$ and $\Delta(R/I,\mu )$ are both rational polytopes which are affinely-equivalent.
\end{theorem}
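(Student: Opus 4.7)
The plan is to promote the linear equivalence of $\mu $ and $\nu $ established in~\Cref{thm:affinetransformation} to an affine equivalence of the two Newton-Okounkov bodies. Rational polytopality is immediate: when $A$ has a finite (homogeneous) Khovanskii basis $\{g_1, \ldots, g_m\},$ the discussion preceding~\Cref{thm:NOBodyConstruction} identifies $\Delta (A, \nu ) = \conv \{\nu '(g_i)/d_i : 1 \le i \le m\},$ while~\Cref{thm:NOBodyConstruction} expresses $\Delta (R/I, \mu ) = \conv \{\|\vec{d}\|^2 \mu ([x_i])/\mu ([x_i])_1\};$ both are convex hulls of finitely many rational points in finite-dimensional $\bQ $-vector spaces.

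For the equivalence itself, I would invoke~\Cref{thm:affinetransformation} to obtain the invertible linear map $\phi $ satisfying $\phi (\mu ([x_i])) = \nu (g_i) = (d_i, \nu '(g_i))$ for each $i,$ where the first coordinate of $\nu $ records the degree by construction. The proof of~\Cref{thm:NOBodyConstruction} yields $\mu ([x_i])_1 = d_i/\|\vec{d}\|^2,$ so the generators of $\Delta (R/I, \mu )$ can be rewritten as $\|\vec{d}\|^4 \mu ([x_i])/d_i.$ Applying $\phi $ then gives
\[
\phi \!\left(\frac{\|\vec{d}\|^4 \mu ([x_i])}{d_i}\right) = \frac{\|\vec{d}\|^4 \nu (g_i)}{d_i} = \|\vec{d}\|^4 \left(1, \frac{\nu '(g_i)}{d_i}\right),
\]
i.e., $\|\vec{d}\|^4$ times the canonical lift of the $i$th vertex of $\Delta (A, \nu )$ into the degree-one hyperplane of the target of $\nu .$

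I would then complete the proof by noting that the composition of $\phi $ with scaling by $\|\vec{d}\|^{-4}$ and projection away from the first (degree) coordinate is an affine map between the ambient rational vector spaces that sends each vertex of $\Delta (R/I, \mu )$ bijectively onto the corresponding vertex of $\Delta (A, \nu );$ since affine maps preserve convex hulls, it carries $\Delta (R/I, \mu )$ onto $\Delta (A, \nu ).$ Its inverse is built identically from $\phi^{-1},$ yielding an affine isomorphism of rational polytopes.

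The main obstacle I anticipate is the careful bookkeeping of the two different degree conventions: the first coordinate of $\nu $ is literally the degree, while the analogous coordinate of $\mu $ is the degree scaled by $\|\vec{d}\|^{-2},$ and the resulting $\|\vec{d}\|^4$ factor must be tracked through the normalizations that define the two NO bodies. Once this scaling is reconciled, invertibility of $\phi $ makes the affine equivalence fall out essentially automatically.
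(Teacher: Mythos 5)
Your proposal is correct and follows essentially the same route as the paper: both build the affine equivalence from the invertible linear map $\phi$ of \Cref{thm:affinetransformation} and then reconcile the two degree conventions. The paper phrases the degree bookkeeping via a block-triangular decomposition of $\phi$ (with upper-right block $0^T$ coming from the fact that $\mu([x_i])_1$ is a fixed scalar multiple of $d_i$) and then declares the affine map to be $x\mapsto Mx+b$ ``after scaling by degrees''; you instead make the same scaling explicit by rewriting the vertices $\|\vec d\|^2\mu([x_i])/\mu([x_i])_1$ as $\|\vec d\|^4\mu([x_i])/d_i$, pushing through $\phi$, and composing with $\|\vec d\|^{-4}$-scaling and the projection that drops the degree coordinate. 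Your version has the minor advantage of making the $\|\vec d\|$-factors visible and unambiguous, which the paper leaves implicit, but the underlying idea is identical.
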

To be clear,~\Cref{thm:affine-equivalence} states that there is an affine transformation taking one Newton-Okounkov body to the other, and that this affine transformation is invertible when restricted to the $\bQ$-affine spans of the respective Newton-Okounkov bodies.
\begin{proof}
We write the linear transformation $\phi$ from Theorem \ref{thm:affinetransformation}, which takes $\mu([x_i])$ to $\nu(\pi(x_i))$, in terms of the coordinates presented in this section.  Since one coordinate of each of $\mu([x_i])$ and $\nu(g_i)$ records the degree of $[x_i]$ or $g_i$, respectively, the standard matrix for $\phi$ decomposes (after a suitable permutation of the coordinates) as
$$
\begin{pmatrix}
\ast&0^T\\
b&M
\end{pmatrix},
$$
where $\ast$ denotes the nonzero scaling factor between the two representations of the degrees.  Using this notation, and scaling by degrees, the appropriate affine transformation from $\Delta(R/I,\mu)$ to $\Delta(A,\nu)$ is $x\mapsto Mx+b.$  
\end{proof}

\bibliographystyle{plain}
\bibliography{references}

\end{document}